\newtheorem{proposition}{Proposition}[section]
\newtheorem{lemma}[proposition]{Lemma}
\newtheorem{theorem}[proposition]{Theorem}
\newtheorem{corollary}[proposition]{Corollary}
\newtheorem{conjecture}{Conjecture}[section]
\theoremstyle{definition}
\newtheorem{remark}[proposition]{Remark}
\newtheorem{definition}[proposition]{Definition}
\newcommand{\R}{\mathbb{R}}
\newcommand{\Q}{\mathbb{Q}}
\newcommand{\pr}{\mathbb{P}}
\newcommand{\scH}{\mathcal{H}}
\newcommand{\scM}{\mathcal{M}}
\newcommand{\scO}{\mathcal{O}}
\DeclareMathOperator{\Aut}{Aut}
\DeclareMathOperator{\Ric}{Ric}
\title[Alpha invariants and the Mabuchi functional]{Alpha invariants and coercivity of the Mabuchi functional on Fano manifolds }
\author{Ruadha\'i Dervan}
\begin{document}

\maketitle

\begin{abstract}
We give a criterion for the coercivity of the Mabuchi functional for general K\"ahler classes on Fano manifolds in terms of Tian's alpha invariant. This generalises a result of Tian in the anti-canonical case implying the existence of a K\"ahler-Einstein metric. We also prove the alpha invariant is a continuous function on the K\"ahler cone. As an application, we provide new K\"ahler classes on a general degree one del Pezzo surface for which the Mabuchi functional is coercive.
\end{abstract}


\section{Introduction}

A central problem in K\"ahler geometry is to understand the existence of constant scalar curvature K\a"ahler (cscK) metrics in a fixed K\"ahler class on a K\"ahler manifold. The constant scalar curvature equation is a fully nonlinear fourth order PDE, which in general has proven very difficult to understand. An important idea due to Mabuchi is that one can attempt to understand the existence of cscK metrics through properties of an associated functional, now called the Mabuchi functional \cite{TM}. This is a functional on the space of K\"ahler metrics in a fixed K\"ahler class, and has critical points the metrics of constant scalar curvature, when they exist. Defining a Riemannian metric on the space of K\"ahler metrics, one sees that the Mabuchi functional is convex along geodesics.  Motivated by this picture, Mabuchi conjectured that the existence of a cscK metric should be equivalent to the Mabuchi functional being bounded below \cite{TM}. This conjecture was refined by Tian to require coercivity, which is slightly stronger than boundedness.

\begin{conjecture}[Tian]\cite[Conjecture 7.12]{GT3}\label{mabuchiconjecture} A K\"ahler manifold $(X,\omega)$ admits a cscK metric if and only if the Mabuchi functional is coercive. \end{conjecture}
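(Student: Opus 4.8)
The plan is to separate the two implications, since they rest on entirely different technology, and to be candid that a complete proof is beyond current reach: this is a central open problem, and the present paper only supplies a sufficient criterion (via the alpha invariant) for one direction. I would therefore treat the statement as two distinct theorems and describe the natural strategy for each.

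For the forward direction, that existence of a cscK metric implies coercivity of $\scM$, I would work variationally in the space of K\"ahler potentials in the class $[\omega]$. Since a cscK metric is a critical point of $\scM$ and $\scM$ is convex along geodesics, such a metric is a global minimizer, so $\scM$ is bounded below; the real task is to upgrade boundedness to coercivity. I would pass to the metric completion of the space of potentials (the finite-energy class $\mathcal{E}^1$ with Darvas' $d_1$-distance), along which $\scM$ extends as a convex lower semicontinuous functional, and seek a uniform lower bound of the form $\scM \ge \varepsilon\, d_1 - C$ by establishing quantitative strict convexity along geodesics emanating from the minimizer. The principal subtlety here is the presence of holomorphic vector fields: when $\Aut(X,[\omega])$ is positive-dimensional, $\scM$ is invariant under the induced action and only coercivity \emph{modulo} this group can hold. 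I would handle this by restricting to potentials transverse to the automorphism orbits and proving a Poincar\'e-type inequality there.

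For the reverse direction, that coercivity of $\scM$ implies existence, which is the analytic heart of the conjecture, I would run a continuity method for the cscK equation, deforming a background metric toward a solution and extracting a cscK metric in the limit. Openness of the path follows from the implicit function theorem (again after quotienting by $\Aut$), so everything reduces to closedness, i.e.\ to \emph{a priori} estimates along the path. The coercivity hypothesis gives a uniform bound on $\scM$, and hence, through its entropy term, a uniform entropy bound on the evolving metrics; the heart of the argument is to parlay this into uniform $C^0$, then Laplacian, and then higher-order control of the potentials.

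The main obstacle, by a wide margin, is exactly this last step. Because the scalar curvature equation is fully nonlinear and fourth order, it admits no naive maximum principle, so one cannot estimate the potential directly. I would instead decouple the equation into a coupled complex Monge--Amp\`ere system for the metric together with its Ricci potential and estimate the two pieces in tandem, feeding the entropy bound into an $L^p$/Moser-iteration scheme for $C^0$ and then a second-order Chern--Lu-type inequality for the Laplacian. Carrying these coupled estimates through uniformly is where essentially all the difficulty of the conjecture concentrates, and it is precisely this analytic input that the alpha-invariant criterion of the present paper is designed to supply in the Fano setting, where coercivity can be certified geometrically rather than verified by hand.
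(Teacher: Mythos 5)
The statement you were asked to prove is labelled a \emph{conjecture} in the paper, and the paper contains no proof of it: it is Tian's Conjecture 7.12 from \cite{GT3}, quoted as motivation. At the time of writing it was a major open problem; the paper only records that the forward direction is known at the level of \emph{boundedness} \cite{BB,CT}, and that the full equivalence was proven by Tian only in the special case $\omega\in c_1(X)$ (the K\"ahler--Einstein case). So there is no paper proof to compare your attempt against, and your candid assessment that a complete proof is beyond reach is exactly right. Your roadmap is also mathematically sensible: it tracks how the two directions were in fact attacked in the literature (convexity of $\scM$ along geodesics and the finite-energy completion for existence $\Rightarrow$ coercivity; a continuity method with a priori estimates, reducing the fourth-order equation to a coupled Monge--Amp\`ere system, for coercivity $\Rightarrow$ existence). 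You are also correct to flag the automorphism issue: as literally stated, with coercivity meaning $\scM_{\omega}(\phi)\geq a I_{\omega}(\phi)+b$, the conjecture fails whenever $\Aut(X,[\omega])$ is positive-dimensional (e.g.\ Fubini--Study on $\pr^n$), since $\scM$ is constant along automorphism orbits while $I_{\omega}$ is unbounded there; any honest formulation must quotient by, or restrict transverse to, these orbits.

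That said, as a proof your proposal has gaps at every essential step, and you acknowledge as much: the upgrade from boundedness to coercivity via ``quantitative strict convexity'' is not established (convexity of $\scM$ along geodesics alone gives only a lower bound, not linear growth against $I_{\omega}$), and the closedness step of the continuity method --- uniform $C^0$, Laplacian, and higher-order estimates from an entropy bound alone --- is precisely the analytic core that was unresolved when this paper was written. Naming Moser iteration and Chern--Lu inequalities identifies the right tools but does not carry them through; in particular nothing in your sketch explains how the entropy bound controls $\sup_X|\phi|$ for the fourth-order equation, which is where the known approaches require genuinely new input. So the correct conclusion is the one the paper itself draws: the statement remains a conjecture, and what can actually be proven --- and what this paper does prove --- is a sufficient alpha-invariant criterion for coercivity (Theorem \ref{intromaintheorem}), not the equivalence itself.
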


It is now known that the existence of a cscK metric implies boundedness of the Mabuchi functional \cite{BB,CT}. Conjecture \ref{mabuchiconjecture} was proven by Tian when $\omega\in c_1(X)$ \cite[Theorem 7.13]{GT3}, so that $X$ is a Fano manifold and the metric is K\"ahler-Einstein, however less is known in the general case. 

When $\omega\in c_1(X)$, Tian introduced a \emph{sufficient} condition for the existence of a K\"ahler-Einstein metric using the alpha invariant. Let $\omega$ be a K\"ahler form and denote the space of K\"ahler potentials by $$\scH(\omega) = \{\phi\in C^{\infty}(X,\R): \omega+i\partial\bar{\partial}\phi > 0\}.$$ \begin{definition}The alpha invariant $\alpha(X,[\omega])$ of $(X,[\omega])$ is defined as $$\alpha(X,[\omega])) = \sup\left\{ \beta: \int_X e^{-\beta(\phi - \sup_X \phi)}\omega^n < c \right\},$$ for some $c$ independent of $\phi\in \scH(\omega)$. When $\omega\in c_1(L)$ for some ample line bundle $L$, we denote by $\alpha(X,L)$ the corresponding alpha invariant. \end{definition} 

\begin{theorem}\label{tianalpha}\cite{GT1} Let $X$ be an $n$-dimensional Fano manifold with $\alpha(X,c_1(X))>\frac{n}{n+1}$. Then $X$ admits a K\"ahler-Einstein metric. \end{theorem}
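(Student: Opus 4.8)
The plan is to prove existence by the Aubin--Yau continuity method. Writing $\omega\in c_1(X)$ and letting $h_\omega$ be the Ricci potential, normalised by $\Ric(\omega)-\omega = i\partial\bar\partial h_\omega$ and $\frac1V\int_X e^{h_\omega}\omega^n = 1$ where $V=\int_X\omega^n$, the K\"ahler--Einstein equation $\Ric(\omega_\phi)=\omega_\phi$ for $\omega_\phi = \omega+i\partial\bar\partial\phi$ is equivalent to the complex Monge--Amp\`ere equation $(\omega+i\partial\bar\partial\phi)^n = e^{h_\omega-\phi}\omega^n$. I would embed this in the family
$$(\omega+i\partial\bar\partial\phi_t)^n = e^{h_\omega - t\phi_t}\,\omega^n, \qquad t\in[0,1],$$
and set $S=\{t\in[0,1]: \text{a smooth }\phi_t\in\scH(\omega)\text{ solving the equation exists}\}$. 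At $t=0$ this is the Calabi problem, solved by Yau, so $0\in S$. Openness of $S$ follows from the implicit function theorem: the linearisation of the equation at a solution is $\Delta_{\omega_{\phi_t}}+t$, which for $t<1$ has trivial kernel because the first nonzero eigenvalue of $-\Delta_{\omega_{\phi_t}}$ exceeds $t$ along the path.

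The substance is closedness, which reduces to uniform a priori estimates. Given a uniform bound on the oscillation $\sup_X\phi_t-\inf_X\phi_t$, the higher-order estimates are classical: Yau's second-order estimate bounds $\Delta_\omega\phi_t$, the equation then becomes uniformly elliptic with controlled coefficients so Evans--Krylov gives a $C^{2,\gamma}$ bound, and Schauder bootstrapping followed by Arzel\`a--Ascoli upgrades this to $C^\infty$ convergence, so that the limit solves the equation and $S$ is closed. The heart of the proof is therefore the uniform zeroth-order estimate, and this is exactly where the hypothesis on the alpha invariant enters.

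For the $C^0$ estimate I would exploit two complementary facts about $\psi_t := \phi_t - \sup_X\phi_t\le 0$. First, the Green's function of $(X,\omega)$, together with $\Delta_\omega\phi_t\ge -n$, gives a one-sided bound $\sup_X\phi_t - \frac1V\int_X\phi_t\,\omega^n\le C_0$ depending only on $(X,\omega)$. Second, and crucially, for any $\beta<\alpha(X,c_1(X))$ the alpha invariant gives $\frac1V\int_X e^{-\beta\psi_t}\omega^n\le C_\beta$ uniformly in $t$; applying Jensen's inequality to the probability measure $\omega^n/V$ yields
$$\sup_X\phi_t - \frac1V\int_X\phi_t\,\omega^n = \frac1V\int_X(-\psi_t)\,\omega^n \le \frac1\beta\log C_\beta.$$
Combining these with the energy functionals $I(\phi)=\frac1V\int_X\phi(\omega^n-\omega_\phi^n)$ and $J(\phi)$, for which Aubin's inequalities $\frac1{n+1}I\le I-J\le \frac{n}{n+1}I$ hold, and with the entropy inequality $\frac{t}{V}\int_X\phi_t\,\omega_{\phi_t}^n\le \sup_X h_\omega$ obtained by integrating the equation against $\omega_{\phi_t}^n$, one bounds the oscillation in terms of $I(\phi_t)$ and closes the estimate. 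The point is that, measuring the nonlinear term with exponent $t\le 1$ against the functional inequality $J\le\frac{n}{n+1}I$, the resulting a priori inequality becomes self-improving precisely when one can choose $\beta$ with $\frac{n}{n+1}<\beta<\alpha$, i.e.\ exactly under the hypothesis $\alpha>\frac{n}{n+1}$.

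The hard part will be this last bookkeeping: extracting the sharp constant $\frac{n}{n+1}$ rather than the weaker threshold $\alpha>1$ that a crude $L^p$/Kolodziej argument would give. This requires carefully tracking the interplay along the continuity path between the alpha-invariant bound on $\sup_X\phi_t-\frac1V\int_X\phi_t\,\omega^n$, the monotonicity of $(I-J)(\phi_t)$, and Aubin's inequalities, so that the oscillation bound, and hence all a priori estimates, hold uniformly up to $t=1$. Since $S$ is then open, closed and nonempty in the connected interval $[0,1]$, we conclude $S=[0,1]$, and the solution at $t=1$ is the desired K\"ahler--Einstein metric.
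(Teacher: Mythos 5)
Your route --- the Aubin--Yau continuity method --- is Tian's original proof in \cite{GT1}, and it is genuinely different from the route this paper takes: here the statement is recovered as the case $[\omega]=c_1(X)$ of Theorem \ref{intromaintheorem}, whose proof controls the Mabuchi functional directly (the entropy term via the alpha invariant, Lemma \ref{alphalemma}, and the energy term via the integration-by-parts positivity of Lemmas \ref{toprovecoercivity} and \ref{LnE}), giving coercivity; one then invokes Tian's theorem that coercivity implies existence of a K\"ahler--Einstein metric \cite[Theorem 7.13]{GT3}. The coercivity route is what generalises to arbitrary K\"ahler classes; the continuity method is tied to the anti-canonical class but constructs the metric directly.

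However, your proposal has a genuine gap, and it sits exactly where the theorem lives. The uniform $C^0$ estimate with threshold $\frac{n}{n+1}$ \emph{is} the content of the result --- openness, Yau's second-order estimate, Evans--Krylov and bootstrapping are standard machinery --- yet you defer precisely this step as ``bookkeeping''. Two concrete pieces are missing. First, the ingredients you list cannot bound the oscillation: the Green's function of the fixed metric $\omega$ gives only the one-sided bound $\sup_X\phi_t - \frac1V\int_X\phi_t\,\omega^n\le C_0$, and to control $-\inf_X\phi_t$ one needs the Harnack-type estimate $-\inf_X\phi_t \le -\frac1V\int_X\phi_t\,\omega_{\phi_t}^n + C$, which requires uniform Green's function (or Sobolev) bounds for the \emph{evolving} metrics $\omega_{\phi_t}$, obtained from the Ricci lower bound $\Ric(\omega_{\phi_t}) = t\omega_{\phi_t}+(1-t)\omega\ge t\omega_{\phi_t}$ and Myers' theorem along the path. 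This is a substantive geometric input, not bookkeeping; and the $L^p$/Kolodziej substitute you allude to is exactly what is unavailable here, since $\alpha>\frac{n}{n+1}$ does not put the density $e^{h_\omega-t\phi_t}$ in $L^p$ for any $p>1$. Second, the mechanism that produces the sharp constant is a path-integrated identity you never state: differentiating the equation in $t$ and integrating gives
$$-\frac1V\int_X\phi_t\,\omega_{\phi_t}^n \;=\; (I-J)(\phi_t)-\frac1t\int_0^t(I-J)(\phi_s)\,ds \;\le\; (I-J)(\phi_t)\;\le\; \frac{n}{n+1}I(\phi_t),$$
and feeding this into the alpha/Jensen inequality $\beta\left(\sup_X\phi_t - \frac1V\int_X\phi_t\,\omega_{\phi_t}^n\right) \le -\frac tV\int_X\phi_t\,\omega_{\phi_t}^n + C$, together with $\sup_X\phi_t - \frac1V\int_X\phi_t\,\omega_{\phi_t}^n \ge I(\phi_t)$, yields $\left(\beta - \frac{n}{n+1}t\right)I(\phi_t)\le C$ --- a uniform bound precisely when some admissible $\beta$ exceeds $\frac{n}{n+1}$. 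Without these two steps, what you have written is a plan for Tian's proof rather than a proof.
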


This criterion is one of very few methods of constructing K\"ahler-Einstein metrics, and was used in a fundamental way in Tian-Yau's classification of K\"ahler-Einstein metrics on del Pezzo surfaces \cite{GT2,TY}. Tian's original proof of Theorem \ref{tianalpha} used the continuity method, however the above Theorem can also be proven by showing that the alpha invariant condition implies that the Mabuchi functional is coercive, which in turn implies the existence of a K\"ahler-Einstein metric \cite[Theorem 7.13]{GT3}. 

We generalise Tian's criterion for coercivity of the Mabuchi functional to general K\"ahler classes on some manifolds. In order to ease notation, define the \emph{slope} of a K\"ahler manifold $(X,\omega)$ to be $$\mu(X,[\omega]) = \frac{\int_X c_1(X).[\omega]^{n-1}}{\int_X [\omega]^{n}}.$$ Note that the slope is a topological quantity which, by scaling $[\omega]$, can be assumed equal to one. For $G$ a compact subgroup of $\Aut(X,[\omega])$, we similarly define $\alpha_G(X,[\omega])$ by considering only $G$-invariant K\"ahler potentials. Similarly we define the appropriate notion of coercivity of the Mabuchi functional, again considering only $G$-invariant K\"ahler potentials. Our main result is as follows.

 \begin{theorem}\label{intromaintheorem} Let $(X,\omega)$ be a K\"ahler manifold of dimension $n$. Suppose that \begin{itemize} \item[(i)] $\alpha_G(X,[\omega])>\mu(X,[\omega]) \frac{n}{n+1}$, \item[(ii)] $c_1(X)\geq \frac{n}{n+1}\mu(X,[\omega]) [\omega]$, in the sense that the difference is nef. \end{itemize} Then the Mabuchi functional is coercive on the space of $G$-invariant K\"ahler potentials for $(X,[\omega])$. \end{theorem}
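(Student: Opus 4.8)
The plan is to combine the Chen--Tian decomposition of the Mabuchi functional with the Gibbs (Legendre) duality for the entropy, using (i) to control the entropy and (ii) to control the remaining curvature term. Throughout I would normalise $\sup_X\phi=0$ (legitimate, since $\scM$ is unchanged under $\phi\mapsto\phi+c$) and restrict to $G$-invariant $\phi$. Write $\omega_\phi=\omega+i\partial\bar\partial\phi$, $V=\int_X\omega^n$ and $\mu=\mu(X,[\omega])$, and recall the decomposition
\[
\scM(\phi)=\int_X\log\!\Big(\frac{\omega_\phi^n}{\omega^n}\Big)\frac{\omega_\phi^n}{V}+\bar S\,E(\phi)-n\,E_{\Ric(\omega)}(\phi),
\]
where $\bar S=n\mu$ is the average scalar curvature, $E$ is the Monge--Amp\`ere (Aubin--Mabuchi) energy, $E_\theta$ the $\theta$-twisted energy, and the entropy term is nonnegative. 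This can be checked by differentiating along a path and matching $-\tfrac1V\int_X\dot\phi\,(S(\omega_\phi)-\bar S)\,\omega_\phi^n$.

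First I would perform an algebraic reduction of the curvature terms. Set $\eta=\Ric(\omega)-\tfrac{n}{n+1}\mu\,\omega$, a form whose class is nef by hypothesis (ii). Using the elementary identity $E_\omega(\phi)=\tfrac{n+1}{n}E(\phi)-\tfrac1{nV}\int_X\phi\,\omega_\phi^n$ together with the linearity of $E_\theta$ in $\theta$, the curvature terms collapse to
\[
\bar S\,E(\phi)-n\,E_{\Ric(\omega)}(\phi)=\frac{n}{n+1}\mu\cdot\frac1V\int_X\phi\,\omega_\phi^n-n\,E_\eta(\phi).
\]
The factor $\tfrac{n}{n+1}$ appearing in both hypotheses is precisely what makes this cancellation produce the coefficient $\tfrac{n}{n+1}\mu$.

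Next I would bound the entropy via the alpha invariant. By the Gibbs variational principle the entropy dominates $\tfrac1V\int_X f\,\omega_\phi^n-\log\big(\tfrac1V\int_X e^{f}\omega^n\big)$ for every $f$. Taking $f=-\beta\phi$ with $\tfrac{n}{n+1}\mu<\beta<\alpha_G(X,[\omega])$, which is possible by (i), and invoking the definition of $\alpha_G$ to bound $\int_X e^{-\beta(\phi-\sup_X\phi)}\omega^n<c$, I obtain
\[
\int_X\log\!\Big(\frac{\omega_\phi^n}{\omega^n}\Big)\frac{\omega_\phi^n}{V}\ \ge\ -\frac{\beta}{V}\int_X\phi\,\omega_\phi^n-C_\beta.
\]
Combined with the reduction above, the linear terms assemble into $\big(\beta-\tfrac{n}{n+1}\mu\big)\big(-\tfrac1V\int_X\phi\,\omega_\phi^n\big)$, whose coefficient is strictly positive; and for $\sup_X\phi=0$ the standard comparisons among the energy functionals give $-\tfrac1V\int_X\phi\,\omega_\phi^n\ge I(\phi)\ge J(\phi)\ge0$.

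It remains to bound $-n\,E_\eta(\phi)$ from below, and this is where I expect the main obstacle to lie. Since $E_\theta$ genuinely depends on the representative and $\eta$ is not pointwise positive, one cannot assert $E_\eta\le0$ directly: the difficulty is to transfer the cohomological positivity in (ii) into an analytic estimate on the representative-dependent functional. I would use that nefness of $[\eta]$ provides, for each $\epsilon>0$, a smooth representative $\eta_\epsilon=\eta+i\partial\bar\partial g_\epsilon\ge-\epsilon\omega$ with $g_\epsilon$ smooth, hence bounded on $X$. The cocycle identity $E_{\eta_\epsilon}(\phi)-E_\eta(\phi)=\tfrac1{nV}\int_X g_\epsilon(\omega_\phi^n-\omega^n)$ controls the change of representative by $\sup_X|g_\epsilon|$, while $\eta_\epsilon+\epsilon\omega\ge0$ and $\phi\le0$ force $E_{\eta_\epsilon+\epsilon\omega}(\phi)\le0$; together these yield $-n\,E_\eta(\phi)\ge -c\,\epsilon\,J(\phi)-C_\epsilon$ for a dimensional constant $c$. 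Fixing $\epsilon$ so small that $\beta-\tfrac{n}{n+1}\mu-c\epsilon>0$ and assembling the three steps gives $\scM(\phi)\ge\lambda J(\phi)-C$ with $\lambda>0$ for all $G$-invariant $\phi$, which is the asserted coercivity. The crux is thus the last step: extracting from (ii) a lower bound on $E_\eta$ whose error is small enough to be absorbed into the strict slack furnished by (i).
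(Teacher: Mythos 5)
Your proposal is correct and is essentially the paper's own argument written in energy-functional notation: the same Jensen-inequality entropy bound via the alpha invariant, the same cancellation isolating the strictly positive coefficient $\beta - \frac{n}{n+1}\mu(X,[\omega])$ in front of $-\frac{1}{V}\int_X\phi\,\omega_\phi^n \geq I_\omega(\phi)$, and the same treatment of the nef class $c_1(X)-\frac{n}{n+1}\mu(X,[\omega])[\omega]$ by taking a representative positive up to $\epsilon\omega$, splitting off a bounded $i\partial\bar\partial$-potential controlled by integration by parts (your cocycle identity is exactly the paper's $\psi$-term computation), and absorbing the $\epsilon$-error into the slack furnished by hypothesis (i). The only difference is bookkeeping: the paper distributes the alpha-invariant slack upfront (via Lemma \ref{LnE}) so that $[\eta]+\epsilon[\omega]$ is K\"ahler and admits an honestly positive representative $\xi$, whereas you absorb the nefness error at the end by shrinking $\epsilon$ — the same maneuver in a different order (and note your final error bound is most cleanly stated against $-\frac{1}{V}\int_X\phi\,\omega_\phi^n$ itself, which your monotonicity comparisons already provide, rather than against $J$, which additionally requires the standard uniform bound on $\int_X\phi\,\omega^n$ for sup-normalised $\omega$-psh functions).
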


The second condition gives an explicit neighbourhood of the anti-canonical bundle of a Fano manifold for which Theorem \ref{intromaintheorem} can be applied. Theorem \ref{intromaintheorem} reduces to Tian's criterion in the case $[\omega]=c_1(X)$, as in this case the slope condition is automatic. 

Note that the criteria of Theorem \ref{intromaintheorem} do not imply that $X$ is projective, for example they hold for non-projective Calabi-Yau manifolds. Moreover, even when $X$ is projective, they do not imply that $\omega\in c_1(L)$ for some ample line bundle $L$. On the other hand, when $X$ is projective and the slope is non-negative, condition $(ii)$ and the Hodge Index Theorem imply that $X$ must either be numerically Calabi-Yau or Fano \cite[Remark 3.5]{RD1}. 

When $c_1(X)<0$, similar results to Theorem \ref{intromaintheorem} have been intensively studied using an auxiliary equation called the J-flow \cite{XC, SW,BW}. In particular, a result due to Weinkove \cite{BW} implies Theorem \ref{intromaintheorem} in the case of K\"ahler manifolds with negative first Chern class. In contrast with the techniques involved in studying the J-flow, our proof of Theorem \ref{intromaintheorem} is by a direct analysis of the Mabuchi functional. As such, we assume throughout that $\mu(X,[\omega])>0$ so that $X$ is Fano, though we emphasise that the techniques involved in proving our result apply also to the case $c_1(X)\leq 0$.

For a first application of Theorem \ref{intromaintheorem}, we prove in Section \ref{continuitysection} a continuity result for the alpha invariant, extending a result of the author in the case of the ample cone of a projective variety \cite[Proposition 4.2]{RD1}. The proof in the algebraic case uses a characterisation of the alpha invariant in terms of log canonical thresholds of divisors due to Demailly \cite[Theorem A.3]{Demailly}. The algebraic proof extends in a straightforward manner to the case of $\R$-line bundles, assuming Demailly's characterisation holds in this setting. We use instead a characterisation in terms of the complex singularity exponent of singular metrics, which avoids this issue and proves continuity in the (possibly) larger K\"ahler cone.

\begin{theorem}\label{continuity} The alpha invariant is a continuous function on the K\"ahler cone.\end{theorem}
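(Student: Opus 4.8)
The plan is to avoid the algebraic description of $\alpha$ through log canonical thresholds of divisors---which would require an $\R$-line bundle version of \cite[Theorem A.3]{Demailly}---and instead use the description of the alpha invariant as an infimum of complex singularity exponents of singular positive metrics, valid for an arbitrary K\"ahler class. For an $\omega$-plurisubharmonic function $\phi$ normalised by $\sup_X\phi=0$, set
$$c_X(\phi)=\sup\{c>0:\ e^{-c\phi}\in L^1(X,\omega^n)\},$$
so that $\alpha(X,[\omega])=\inf_\phi c_X(\phi)$, the infimum running over all such $\phi$. The first thing I would record is that $c_X(\phi)$ is a purely local integrability exponent: on a compact $X$ any two smooth positive volume forms are comparable (each bounded by a constant times the other), so $c_X(\phi)$ does not depend on the volume form appearing in its definition and is determined by the singularities of $\phi$ alone. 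Consequently the class $[\omega]$ enters only through the set $\mathrm{PSH}(X,\omega)$ of admissible potentials.

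Next I would compare nearby classes directly. Fix a reference K\"ahler form $\omega_0$ and closed real $(1,1)$-forms $\gamma_1,\dots,\gamma_k$ giving a basis of $H^{1,1}(X,\R)$, and represent classes near $[\omega_0]$ by $\omega_t=\omega_0+\sum_i t_i\gamma_i$, which is K\"ahler for $t$ small. Compactness of $X$ yields a constant $C$ with $-C\omega_0\le\gamma_i\le C\omega_0$ pointwise, hence
$$(1-C|t|)\,\omega_0\ \le\ \omega_t\ \le\ (1+C|t|)\,\omega_0,\qquad |t|=\textstyle\sum_i|t_i|,$$
for $|t|$ small. The elementary observation driving the proof is that if $\omega_t\le\lambda\omega_0$ pointwise and $\phi$ is $\omega_t$-psh with $\sup_X\phi=0$, then $\lambda\omega_0+i\partial\bar\partial\phi\ge\omega_t+i\partial\bar\partial\phi\ge0$, so $\phi/\lambda\in\mathrm{PSH}(X,\omega_0)$ (still normalised, since $\lambda>0$), and $c_X(\phi/\lambda)=\lambda\,c_X(\phi)$ because $e^{-c(\phi/\lambda)}=e^{-(c/\lambda)\phi}$. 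Taking the infimum over the resulting subfamily of $\mathrm{PSH}(X,\omega_0)$, and using the volume-form independence from the previous step to match the two definitions of $c_X$, gives $\alpha(X,[\omega_0])\le\lambda\,\alpha(X,[\omega_t])$.

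Applying this with $\lambda=1+C|t|$, and then with the roles of $\omega_0$ and $\omega_t$ interchanged (using $\omega_0\le(1-C|t|)^{-1}\omega_t$), sandwiches the invariant:
$$\frac{1}{1+C|t|}\,\alpha(X,[\omega_0])\ \le\ \alpha(X,[\omega_t])\ \le\ \frac{1}{1-C|t|}\,\alpha(X,[\omega_0]).$$
Letting $t\to0$ forces $\alpha(X,[\omega_t])\to\alpha(X,[\omega_0])$, and as $\omega_0$ is arbitrary this establishes continuity on the entire K\"ahler cone; restricting throughout to $G$-invariant potentials gives the same statement for $\alpha_G$. I expect the only genuine obstacle to lie in the first step: confirming that the singularity-exponent description of $\alpha$ holds for a transcendental, possibly non-integral and non-projective class (which needs the semicontinuity and regularisation theory for quasi-psh functions rather than the divisorial statement), together with the volume-form independence of $c_X$. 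Once these are in hand, the comparison, the scaling identity, and the resulting two-sided bound are routine, and the continuity is immediate.
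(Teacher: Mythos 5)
Your proposal is correct and follows essentially the same route as the paper: both rest on Demailly's characterization of $\alpha(X,[\omega])$ as an infimum of complex singularity exponents of $\omega$-psh functions, and both derive a two-sided multiplicative sandwich between $\alpha(X,[\omega_0])$ and $\alpha(X,[\omega_t])$ by rescaling psh potentials attached to dominated/dominating forms, letting the comparison constant tend to $1$. The only differences are organizational: you fuse the paper's two lemmas (the scaling property $\alpha(X,[\lambda\omega])=\lambda^{-1}\alpha(X,[\omega])$ and the monotonicity $\alpha(X,[\omega+\xi])\leq\alpha(X,[\omega])$ for $\xi$ K\"ahler) into a single pointwise-domination lemma, and you dispose of the volume-form discrepancy by comparability of smooth volume forms on a compact manifold rather than the paper's binomial-expansion argument.
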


It follows that the set of K\"ahler classes satisfying the hypotheses of Theorem \ref{intromaintheorem} is open, provided strict inequality holds in the condition (ii). One should compare this to a result of LeBrun-Simanca \cite{LS}, which states that the existence of a cscK metric is an open condition in the K\"ahler cone of a K\"ahler manifold, provided the derivative of the Futaki invariant vanishes. 

One can also use Theorem \ref{intromaintheorem} to give \emph{explicit} K\"ahler classes with coercive Mabuchi functional. Let $X$ be a general del Pezzo surface of degree one, so that $X$ is isomorphic to the blow-up of $\pr^2$ at eight points in general position. Let $$L_{\lambda} = \pi^*(\scO_{\pr^2}(1))\otimes \scO(-E_1)\otimes\hdots\otimes\scO(-E_7)\otimes\scO(-\lambda E_8),$$ where $\pi: X \to \pr^2$ is the natural blow-up morphism and $E_i$ are the exceptional divisors, and we allow $\lambda$ to be a \emph{real} number. The criteria of Theorem \ref{intromaintheorem} were calculated in \cite[Theorem 1.2]{RD1} for $\lambda\in\Q$, and extend to $\lambda\in\R$ by Theorem \ref{continuity}, giving the following. \begin{corollary}\label{application} The Mabuchi functional on $(X,L_{\lambda})$ is coercive for $$\frac{19}{25}\approx \frac{1}{9}(10-\sqrt{10}) < \lambda < \sqrt{10}-2\approx \frac{29}{25}.$$ \end{corollary}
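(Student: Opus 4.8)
The plan is to deduce the result directly from Theorem \ref{intromaintheorem}, using Theorem \ref{continuity} to promote the computations of \cite[Theorem 1.2]{RD1} from the dense set of rational $\lambda$ to all real $\lambda$. Since $X$ is a surface we have $n=2$, so the constant $\frac{n}{n+1}$ appearing in both hypotheses of Theorem \ref{intromaintheorem} equals $\frac{2}{3}$. Thus it suffices to show that, for the appropriate compact group $G$ of automorphisms used in \cite[Theorem 1.2]{RD1} and for every real $\lambda$ in the stated interval, the class $L_\lambda$ satisfies
\begin{itemize}
\item[(i)] $\alpha_G(X, L_\lambda) > \frac{2}{3}\mu(X, L_\lambda)$, and
\item[(ii)] $c_1(X) - \frac{2}{3}\mu(X, L_\lambda)L_\lambda$ is nef,
\end{itemize}
after which coercivity is immediate.

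First I would record that the quantities entering (ii) and the right-hand side of (i) are manifestly algebraic in $\lambda$. The self-intersection $L_\lambda^2$ and the intersection $c_1(X).L_\lambda$ are polynomials in $\lambda$ of degree at most two, computed from $H^2=1$, $E_i^2=-1$ and the orthogonality of $H, E_1,\dots,E_8$; hence the slope $\mu(X, L_\lambda)$ is an explicit rational function of $\lambda$, continuous wherever $L_\lambda$ is K\"ahler. The class in (ii) then varies polynomially with $\lambda$, and its nefness is tested against the finitely many extremal $(-1)$-curves of the degree one del Pezzo surface; each such test is a polynomial inequality in $\lambda$, so condition (ii) holds exactly on an explicit $\lambda$-range whose boundary values are roots of quadratics in $\lambda$ (consistent with the appearance of $\sqrt{10}$, e.g. $\sqrt{10}-2$ solving $\lambda^2+4\lambda-6=0$).

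The only quantity not a priori defined for irrational $\lambda$ is the alpha invariant, and this is exactly where Theorem \ref{continuity} enters. In \cite[Theorem 1.2]{RD1} the invariant is computed for $\lambda\in\Q$ via Demailly's description in terms of log canonical thresholds, yielding an explicit continuous function $A(\lambda)$ with $\alpha_G(X, L_\lambda)=A(\lambda)$ for rational $\lambda$. As $\lambda$ ranges over an interval, $L_\lambda$ traces a path in the K\"ahler cone, so by Theorem \ref{continuity} the map $\lambda\mapsto \alpha_G(X, L_\lambda)$ is continuous; since it agrees with the continuous formula $A(\lambda)$ on the dense set $\Q$, the two coincide for all $\lambda\in\R$. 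Consequently (i) is equivalent to the explicit strict inequality $A(\lambda)>\frac{2}{3}\mu(X,L_\lambda)$, which now makes sense for all real $\lambda$ and again holds on an open interval bounded by roots of a quadratic. Intersecting the ranges coming from (i) and (ii) gives precisely $\left(\frac{1}{9}(10-\sqrt{10}),\,\sqrt{10}-2\right)$, and Theorem \ref{intromaintheorem} yields coercivity there.

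The main obstacle I anticipate is not an estimate but exactly this rational-to-real passage for the alpha invariant: the log canonical threshold computation of \cite{RD1} intrinsically produces $\alpha_G(X, L_\lambda)$ only for $\Q$-classes, and one must rule out a jump of the invariant at irrational $\lambda$. This is the content of Theorem \ref{continuity}, so the entire argument hinges on having that continuity result in hand. The remaining bookkeeping — checking that both defining inequalities are strict on the open interval and that its endpoints coincide with the stated values — is routine once the identity $\alpha_G(X, L_\lambda)=A(\lambda)$ is known for all real $\lambda$.
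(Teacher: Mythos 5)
Your proposal is correct and follows essentially the same route as the paper, which likewise deduces Corollary \ref{application} in one step by combining the computations of \cite[Theorem 1.2]{RD1} for rational $\lambda$ with the continuity statement of Theorem \ref{continuity} to pass to all real $\lambda$. The only caveat is that \cite{RD1} produces an explicit continuous \emph{lower bound} for the alpha invariant rather than an exact formula $A(\lambda)$; your dense-set argument goes through verbatim with the equality $\alpha_G(X,L_\lambda)=A(\lambda)$ replaced by the inequality $\alpha_G(X,L_\lambda)\geq A(\lambda)$ (continuity of $\alpha$ transfers the bound from $\Q$ to $\R$, and strictness of the condition comes from the explicit algebraic inequality $A(\lambda)>\tfrac{2}{3}\mu(X,L_\lambda)$ on the open interval), so the conclusion is unaffected.
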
 

Further examples could be obtained from \cite{TD, LSY}, where alpha invariants of general ample line bundles on toric varieties are calculated. 

When $\omega\in c_1(L)$ for some ample line bundle $L$, the Yau-Tian-Donaldson conjecture states that the existence of a cscK metric should be equivalent to the algebro-geometric notion of K-stability \cite{SD}. Theorem \ref{intromaintheorem} is motivated by the following result due to the author, which proves K-stability under the same hypotheses. 

\begin{theorem}\label{kstab}\cite[Theorem 1.1]{RD1}\cite[Theorem 1.9]{RD2} Let  $X$ be a smooth variety together with an ample line bundle $L$. Denote by $K_X$ the canonical bundle of $X$. Suppose that 
\begin{enumerate}[(i)] 
\item $\alpha(X,L)>\frac{n}{n+1}\mu(X,L)$,
\item $-K_X \geq \frac{n}{n+1}\mu(X,L) L$, in the sense that the difference is nef.
\end{enumerate}
Then $(X,L)$ is uniformly K-stable with respect to the minimum norm.\end{theorem}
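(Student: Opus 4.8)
The plan is to verify the defining inequality of uniform K-stability, namely $\DF(\scX,\scL)\geq\epsilon\,\|(\scX,\scL)\|_m$ for a fixed $\epsilon>0$, directly at the intersection-theoretic level. First I would reduce to \emph{normal} test configurations: passing to the normalisation does not increase the Donaldson--Futaki invariant and changes the minimum norm only in a controlled way, so it suffices to treat normal $(\scX,\scL)$. For such a configuration, compactify over $\pr^1$ to obtain $(\bar{\scX},\bar{\scL})$ and invoke the intersection formula of Wang and Odaka, writing
\[
\DF(\scX,\scL)=\frac{1}{V}\,\bar{\scL}^{n}\cdot K^{\log}_{\bar{\scX}/\pr^1}+\frac{n\mu}{(n+1)V}\,\bar{\scL}^{n+1},
\]
where $V=L^n$, $\mu=\mu(X,L)$ and $K^{\log}_{\bar{\scX}/\pr^1}=K_{\bar{\scX}/\pr^1}+(\scX_0)_{\mathrm{red}}$. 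In the same notation the minimum norm is, up to a harmless positive factor, the non-Archimedean $J$-functional
\[
\|(\scX,\scL)\|_m=\frac{1}{V}\Bigl(\bar{\scL}^{n}\cdot(\scX_0)_{\mathrm{red}}-\frac{\bar{\scL}^{n+1}}{n+1}\Bigr).
\]

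The heart of the argument is to bound the two terms of $\DF$ from below by a positive multiple of $\|(\scX,\scL)\|_m$. Condition (ii) enters first: writing $-K_X=\tfrac{n}{n+1}\mu L+P$ with $P$ nef, the pullback of $P$ meets $\bar{\scL}^{n}$ and the central fibre non-negatively because $\bar{\scL}$ is relatively semiample, so the contribution of $P$ to the relevant intersection numbers can be discarded. This reduces the estimate to the model situation $-K_X=\tfrac{n}{n+1}\mu L$, precisely the numerical regime in which the anti-canonical slope is proportional to $L$.

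It remains to control the log-canonical (entropy) term $\bar{\scL}^{n}\cdot K^{\log}_{\bar{\scX}/\pr^1}$, and this is where the alpha invariant is used. Each component of the central fibre $\scX_0=\sum_i a_iE_i$ determines a divisorial valuation $v_i$ on the function field of $X$, with log discrepancy $A_X(v_i)$. Demailly's characterisation identifies $\alpha(X,L)$ with the infimum of $\lct(X,D)$ over effective $\mathbb{Q}$-divisors $D\sim_{\mathbb{Q}}L$; combined with the inequality $A_X(v)\geq\lct(X,D)\,v(D)$ valid for every such $D$, this gives $A_X(v_i)\geq\alpha(X,L)\,v_i(D)$, and optimising over $D$ converts the log-canonical term into a quantity comparable to $\|(\scX,\scL)\|_m$. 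Feeding the strict inequality of (i), $\alpha(X,L)>\tfrac{n}{n+1}\mu(X,L)$, into this comparison produces a genuinely positive $\epsilon$, giving $\DF(\scX,\scL)\geq\epsilon\,\|(\scX,\scL)\|_m$.

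The main obstacle is exactly this last estimate: relating the discrepancies of the components of a possibly non-reduced, reducible central fibre to the \emph{global} alpha invariant of $(X,L)$, and showing that the resulting bound is uniform in the test configuration. One must choose, for each $v_i$, an effective divisor $D\sim_{\mathbb{Q}}L$ nearly computing the relevant log canonical threshold, and carefully track the boundary contributions of $(\scX_0)_{\mathrm{red}}$ versus $\scX_0$ in the intersection numbers. A secondary technical point is the passage between the analytic alpha invariant (via the complex singularity exponent) and the algebraic log canonical threshold; for the projective setting of this theorem these agree by Demailly's result, so the genuine difficulty is the uniform discrepancy bound rather than the analytic--algebraic comparison.
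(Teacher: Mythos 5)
First, a point of order: this paper does not prove Theorem \ref{kstab} at all --- it is imported by citation from \cite{RD1} and \cite{RD2} as motivation for Theorem \ref{intromaintheorem} --- so the only comparison possible is with the proofs in those papers and with the analytic analogue proved here. Your overall architecture (reduce to normal test configurations, apply the Odaka--Wang intersection formula, discard the nef part $P$ of condition (ii) against the nef class $\overline{\scL}$, and control the log-canonical term via Demailly's $\lct$ characterisation of the alpha invariant) is indeed the skeleton of the cited proof, and it is the non-Archimedean mirror of the proof of Theorem \ref{intromaintheorem} given in Section 2.

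There are, however, two genuine gaps. The first is that your formula for the minimum norm is wrong: the quantity $\frac{1}{V}\bigl(\overline{\scL}^{\,n}\cdot(\scX_0)_{\mathrm{red}}-\frac{1}{n+1}\overline{\scL}^{\,n+1}\bigr)$ evaluates to $1$ (not $0$) on the trivial configuration $(X\times\pr^1,p_1^*L)$, and it is not invariant under twisting $\scL$ by a multiple of the whole central fibre, whereas the minimum norm is. The correct expression compares against the trivial configuration: with $\rho\colon\overline{\scX}\dashrightarrow X\times\pr^1$ the canonical birational map (after passing to a dominating model), the minimum norm is, up to equivalence of norms, $$\|(\scX,\scL)\|_m=\frac{1}{V}\Bigl(\overline{\scL}^{\,n}\cdot\rho^*p_1^*L-\frac{n}{n+1}\overline{\scL}^{\,n+1}\Bigr).$$ This is not cosmetic: uniform K-stability is precisely the inequality $\DF\geq\epsilon\|\cdot\|_m$, and with your quantity that inequality is false already for the trivial configuration, where $\DF=0$ but your ``norm'' equals $1$; no estimate of the shape you describe can close around a mis-stated target. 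The second gap is the step you yourself flag as ``the main obstacle'': the uniform lower bound on the relative log-canonical term in terms of $\alpha(X,L)$ times the norm. This is not a technical remainder to be dispatched by ``optimising over $D$'' --- it is the entire mathematical content of the cited theorem, and in \cite{RD1} and \cite{RD2} it occupies the bulk of the argument, requiring a careful intersection-theoretic comparison of discrepancies over $X\times\pr^1$ with the coefficients of $\scX_0$ and of $\scL-\rho^*p_1^*L$. (A smaller slip: your displayed formula for $\DF$ should be an inequality $\geq$, since $K_{\overline{\scX}/\pr^1}=K^{\log}_{\overline{\scX}/\pr^1}+\scX_0-(\scX_0)_{\mathrm{red}}$ and you are discarding an effective term intersected with a nef class; fortunately the direction is the one you need.) In short, the proposal is a correct reading of the known strategy, but its target inequality is mis-stated and its key estimate is missing rather than proved.
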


The uniformity in the above result should be thought of as analogous to coercivity of the Mabuchi functional \cite[Conjecture 1.1]{RD2}, it in particular implies K-stability.

Theorem \ref{intromaintheorem} should be compared to recent work of Li-Shi-Yao, who proved a similar criterion for the coercivity of the Mabuchi functional, using a link with the convergence of the J-flow \cite[Theorem 1.1]{LSY}.

\begin{theorem}\label{lsytheorem}Let $(X,\omega)$ be an n-dimensional compact K\"ahler manifolds. Suppose the K\"ahler class $[\omega]$ satisfies the following conditions for some $\epsilon\geq 0$. 
\begin{enumerate}[(i)] 
\item $\alpha(X,[\omega])>\frac{n}{n+1}\epsilon,$
\item $\epsilon [\omega]>c_1(X),$
\item $$\left(-n\frac{\int_X c_1(X).[\omega]^{n-1}}{\int_X [\omega]^n} +\epsilon \right)[\omega] + (n-1)c_1(X) > 0.$$
\end{enumerate}
Then the Mabuchi functional is coercive on the class $[\omega]$. \end{theorem}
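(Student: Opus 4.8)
The plan is to reduce the statement to the standard entropy--energy structure of the Mabuchi functional and then to balance the two pieces against the three hypotheses. First I would recall the Chen--Tian decomposition of the Mabuchi functional, writing (up to normalisation) $\mathcal{M}(\phi) = H(\phi) + \mathcal{E}(\phi)$, where $H(\phi) = \frac{1}{V}\int_X \log\left(\frac{\omega_\phi^n}{\omega^n}\right)\omega_\phi^n \geq 0$ is the entropy with $V = \int_X[\omega]^n$, and $\mathcal{E}(\phi) = n\,E_{\Ric}(\phi) - \bar{S}\,E(\phi)$ collects the energy terms. Here $E$ is the Monge--Amp\`ere (Aubin--Mabuchi) energy, $E_{\Ric}(\phi) = \frac{1}{nV}\sum_{j=0}^{n-1}\int_X \phi\,\Ric(\omega)\wedge\omega_\phi^j\wedge\omega^{n-1-j}$ is the Ricci-twisted energy, and $\bar S = n\mu(X,[\omega])$ is the average scalar curvature, which matches the coefficient $n\mu$ appearing in hypothesis (iii). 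Since the Mabuchi functional is invariant under $\phi\mapsto\phi+c$, and hence so are both $H$ and $\mathcal{E}$, I would normalise $\sup_X\phi = 0$ throughout and aim for the coercivity estimate $\mathcal{M}(\phi)\geq \delta\,J(\phi) - C$ for the $J$-functional and some $\delta>0$.

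Second, I would extract a lower bound on the entropy from the alpha invariant, which is the only role of hypothesis (i). Using the Legendre-dual (variational) characterisation of relative entropy with the test function $-\beta(\phi - \sup_X\phi)$, one obtains $H(\phi) \geq \beta\left(\sup_X\phi - \frac{1}{V}\int_X\phi\,\omega_\phi^n\right) - \log\left(c/V\right)$ for every $\beta < \alpha(X,[\omega])$, where $c$ is the uniform constant furnished by the definition of the alpha invariant. Comparing $\sup_X\phi - \frac{1}{V}\int_X\phi\,\omega_\phi^n$ with the standard functionals $I$ and $J$ via $\tfrac{1}{n+1}I\leq J\leq \tfrac{n}{n+1}I$ (and using $\sup_X\phi=0$ so that $\frac1V\int_X\phi\,\omega^n\leq 0$), this yields $H(\phi)\geq \beta\,J(\phi) - C$ for all $\beta<\alpha(X,[\omega])$.

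Third, and this is the crux, I would bound the energy part below by $\mathcal{E}(\phi)\geq -\tfrac{n}{n+1}\epsilon\,J(\phi) - C$, which is where hypotheses (ii) and (iii) enter. Using (ii) I would write $\Ric(\omega) = \epsilon\,\omega - \theta$ with $[\theta] = \epsilon[\omega] - c_1(X)$ nef, split $E_{\Ric} = \epsilon E_{[\omega]} - E_\theta$, and re-express $E_{[\omega]}$ through $E$ and $\frac{1}{V}\int_X\phi\,\omega_\phi^n$. The resulting functional is a twisted Monge--Amp\`ere energy whose boundedness below by a multiple of $J$ is governed by a cohomological positivity condition of the type appearing in the theory of the $J$-equation; hypothesis (iii), namely $(-n\mu+\epsilon)[\omega] + (n-1)c_1(X) > 0$, is exactly this positivity (it bounds the defect $n\mu[\omega]-(n-1)c_1(X)$ above by $\epsilon[\omega]$). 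The solvability and convergence results for the $J$-equation and $J$-flow (Song--Weinkove, Collins--Sz\'ekelyhidi) --- the route taken by Li--Shi--Yao --- then produce the required estimate, the factor $\tfrac{n}{n+1}$ arising from the normalisation of the Monge--Amp\`ere energy relative to the $\sup$-control coming from the alpha invariant.

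Finally I would combine the two estimates: choosing $\beta$ with $\tfrac{n}{n+1}\epsilon < \beta < \alpha(X,[\omega])$ --- possible precisely by hypothesis (i) --- gives $\mathcal{M}(\phi) = H(\phi) + \mathcal{E}(\phi) \geq \left(\beta - \tfrac{n}{n+1}\epsilon\right)J(\phi) - C = \delta\,J(\phi) - C$ with $\delta>0$, which is coercivity. I expect the main obstacle to be the third step: converting the purely cohomological positivity of (iii) into an analytic lower bound on $\mathcal{E}$ with a constant sharp enough ($\tfrac{n}{n+1}\epsilon$) that the alpha-invariant budget of (i) closes coercivity rather than mere boundedness. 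Tracking the interplay of the three strict inequalities, and in particular verifying that the defect controlled by (iii) translates into exactly the coefficient $\tfrac{n}{n+1}\epsilon$ needed to match hypothesis (i), is the delicate part, and is where the quantitative $J$-equation estimates do the real work.
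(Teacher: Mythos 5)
This statement is not proved in the paper at all: it is quoted verbatim from Li--Shi--Yao \cite[Theorem 1.1]{LSY} purely for comparison with the paper's own Theorem \ref{intromaintheorem}, and the author explicitly contrasts his ``direct'' method with the J-flow method used by Li--Shi--Yao. So there is no in-paper proof to compare against; what you have written is, in outline, a reconstruction of the \emph{original} Li--Shi--Yao argument. As such it is sound: the entropy bound via the alpha invariant is Tian's Jensen-inequality argument (the analogue of Lemma \ref{alphalemma} here), and your identification of hypothesis (iii) as the Song--Weinkove cohomological criterion is exactly right --- writing $[\chi]=\epsilon[\omega]-c_1(X)$, which is K\"ahler by (ii), one computes $n\frac{[\chi]\cdot[\omega]^{n-1}}{[\omega]^n}[\omega]-(n-1)[\chi] = (\epsilon-n\mu(X,[\omega]))[\omega]+(n-1)c_1(X)$, which is precisely the class in (iii). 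Convergence of the J-flow then bounds the $J_\chi$-type part of the energy below by a constant (boundedness only, not coercivity), so, as you note, the strictly positive coefficient in front of $I$ (or $J$) must come entirely from the surplus $\alpha(X,[\omega])-\frac{n}{n+1}\epsilon>0$ supplied by (i); making that bookkeeping precise via $\frac{1}{n+1}I\leq J\leq\frac{n}{n+1}I$ and an inequality of the type of Lemma \ref{LnE} is routine, and there is no obstruction of principle.

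It is worth seeing what the paper's own technique does differently for its analogous Theorem \ref{intromaintheorem}: it never invokes the J-flow or any PDE input, instead using only integration by parts (Lemmas \ref{Ilemma}, \ref{LnE}, \ref{toprovecoercivity}), Jensen's inequality, and the nefness hypothesis to bound the energy term directly, with the nef class absorbed by writing its representative as a positive form plus $i\partial\bar{\partial}\psi$ and bounding the $\psi$ contribution by a constant. That approach is self-contained and elementary, and on Fano manifolds it yields a strictly stronger criterion (compare the intervals in Corollary \ref{application} and the discussion following Theorem \ref{lsytheorem}); your route, by contrast, outsources the hard analytic step to the Song--Weinkove / Collins--Sz\'ekelyhidi theory of the J-equation as a black box, which is what makes hypotheses (ii) and (iii) --- rather than a single nefness condition --- the natural input.
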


Li-Shi-Yao \cite{LSY} apply their result to a general del Pezzo surface of degree one, showing that their criterion implies the Mabuchi functional is coercive for $$\frac{4}{5}<\lambda < \frac{10}{9},$$ with notation as in Corollary \ref{application}. Note that this interval is strictly contained in the interval obtained in Corollary \ref{application}. Since both criteria are independent of scaling $[\omega]$ linearly, it is natural to expect in general that Theorem \ref{intromaintheorem} is stronger than Theorem \ref{lsytheorem} on Fano manifolds. While their proof relies on properties of an analysis of the J-flow, our proof of Theorem \ref{intromaintheorem} is direct.

Finally, we remark that Theorem \ref{intromaintheorem} can be extended to the setting of the twisted Mabuchi functional and the log Mabuchi functional, which are related to the existence of twisted cscK metrics and cscK metrics with cone singularities along a divisor respectively. The proof is entirely similar, as such we leave the details to the interested reader. The analogous results in terms of twisted K-stability and log K-stability are \cite[Theorem 1.9]{RD2} and \cite[Theorem 1.3]{RD1}.

\

\noindent {\bf Notation and conventions:} We take $X$ throughout to be a compact K\"ahler manifold, and omit certain factors of $2\pi$ for notational convenience.

\

\noindent {\bf Acknowledgements:} I would like to thank my supervisor, Julius Ross, as well as David Witt Nystr\"om and Yoshinori Hashimoto for helpful comments. Supported by a studentship associated to an EPSRC Career Acceleration Fellowship (EP/J002062/1) and a Fondation Wiener-Anspach scholarship.

\section{Coercivity of the Mabuchi functional}

Let $(X,\omega)$ be an $n$-dimensional K\"ahler manifold. Denote by $$\scH(\omega) = \{\phi: \omega_{\phi}=\omega+i\partial\bar{\partial}\phi > 0\}$$ the space of K\"ahler potentials in a fixed K\"ahler class, where we have fixed some basepoint $\omega$. The space of K\"ahler potentials can therefore be identified with a convex subset of $C^{\infty}(X,\R)$. 

\begin{definition}\cite{TM} Let $\phi$ be a K\"ahler potential with K\"ahler form $\omega_{\phi}$, and let $\omega_t$ be any path connecting $\omega=\omega_0$ to $\omega_{\phi}=\omega_1$. Let $\phi_t$ be the corresponding K\"ahler potentials. The \emph{Mabuchi functional} is the functional on $\scH(\omega)$ defined by $$\scM_{\omega}(\phi)=-\int_0^1 \int_X \dot{\phi}_t(S(\omega_t) - n\mu(X,[\omega]))\omega_t^n\wedge dt,$$ where $S(\omega_t)$ is the scalar curvature. Explicitly, the Mabuchi functional is the sum of an \emph{entropy term} and an \emph{energy term} $$\scM_{\omega}(\phi) = H_{\omega}(\phi)+E_{\omega}(\phi),$$ where respectively \begin{align*} H_{\omega}(\phi) &=\int_X \log(\frac{\omega_{\phi}^n}{\omega})\omega_{\phi}^n, \\ E_{\omega}(\phi) &= \mu(X,[\omega])\frac{n}{n+1} \sum_{i=0}^n\int_X \phi \omega^i \wedge \omega^{n-i}_{\phi}  - \sum_{i=0}^{n-1}  \int_X \phi \Ric \omega \wedge \omega^i \wedge \omega_{\phi}^{n-1-i}.\end{align*} In particular the Mabuchi functional is independent of the path chosen connecting $\omega$ to $\omega_{\phi}$.\end{definition}

We will also require Aubin's $I$-functional.

\begin{definition} \emph{Aubin's} $I$ \emph{functional} is defined to be $$I_{\omega}(\phi) =\int_X \phi (\omega^n - \omega_{\phi}^n).$$ \end{definition}

\begin{lemma}\label{Ilemma}\cite[p. 46]{GT3} Aubin's functional is positive, i.e. $I_{\omega}(\phi)\geq 0$.\end{lemma}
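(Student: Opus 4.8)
The plan is to rewrite $I_{\omega}(\phi)$ as a sum of manifestly nonnegative integrals by integrating by parts. First I would expand the difference of top powers using the telescoping identity
$$\omega_{\phi}^n - \omega^n = (\omega_{\phi}-\omega)\wedge\sum_{j=0}^{n-1}\omega_{\phi}^j\wedge\omega^{n-1-j} = i\partial\bar{\partial}\phi\wedge\sum_{j=0}^{n-1}\omega_{\phi}^j\wedge\omega^{n-1-j},$$
which holds because $\omega_{\phi}-\omega = i\partial\bar{\partial}\phi$ and all the forms involved commute under the wedge product. Substituting this into the definition of Aubin's functional gives
$$I_{\omega}(\phi) = \int_X \phi(\omega^n - \omega_{\phi}^n) = -\sum_{j=0}^{n-1}\int_X \phi\, i\partial\bar{\partial}\phi\wedge\omega_{\phi}^j\wedge\omega^{n-1-j}.$$

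The key step is to integrate by parts in each summand. Writing $\Theta_j = \omega_{\phi}^j\wedge\omega^{n-1-j}$, which is a closed $(n-1,n-1)$-form, I would apply Stokes' theorem to $d(\phi\, i\bar{\partial}\phi\wedge\Theta_j)$. Since $\Theta_j$ is closed its contribution drops out, and a bidegree count collapses $d$ to $\partial$ on the relevant term, yielding
$$\int_X \phi\, i\partial\bar{\partial}\phi\wedge\Theta_j = -\int_X i\partial\phi\wedge\bar{\partial}\phi\wedge\Theta_j.$$
Summing over $j$ then produces
$$I_{\omega}(\phi) = \sum_{j=0}^{n-1}\int_X i\partial\phi\wedge\bar{\partial}\phi\wedge\omega_{\phi}^j\wedge\omega^{n-1-j}.$$

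Finally I would check that each term is nonnegative. Because $\phi$ is real-valued, $i\partial\phi\wedge\bar{\partial}\phi$ is a nonnegative $(1,1)$-form (pointwise it is a rank-one positive semidefinite Hermitian form), while $\omega_{\phi}^j\wedge\omega^{n-1-j}$ is a positive $(n-1,n-1)$-form, being a product of the Kähler forms $\omega_{\phi}$ and $\omega$. The wedge of a nonnegative $(1,1)$-form with such a positive $(n-1,n-1)$-form is a nonnegative volume form, so every summand integrates to a nonnegative number and hence $I_{\omega}(\phi)\geq 0$. The only points requiring real care are the sign and bidegree bookkeeping in the integration by parts (in particular checking that $\partial\Theta_j$ does not contribute), and the pointwise positivity of the mixed wedge product $i\partial\phi\wedge\bar{\partial}\phi\wedge\omega_{\phi}^j\wedge\omega^{n-1-j}$; both are standard but are where a sign error would most easily creep in.
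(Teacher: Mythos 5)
Your proof is correct and is essentially the same as the paper's: both factor $\omega^n-\omega_{\phi}^n$ telescopically, substitute $\omega_{\phi}-\omega = i\partial\bar{\partial}\phi$, integrate by parts, and conclude from the pointwise nonnegativity of $i\partial\phi\wedge\bar{\partial}\phi$ wedged with positive $(n-1,n-1)$-forms. Your version merely spells out the Stokes/bidegree bookkeeping that the paper leaves implicit.
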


\begin{proof} Indeed, we have \begin{align*}I_{\omega}(\phi)&=\int_X \phi (\omega^n - \omega_{\phi}^n), \\ &=\int_X \phi (\omega- \omega_{\phi})(\omega_{\phi}^{n-1} +\omega_{\phi}^{n-2}\wedge\omega+\hdots+ \omega^n), \\ &=\int_X \phi (-i\partial\bar{\partial}\phi)(\omega_{\phi}^{n-1} +\omega_{\phi}^{n-2}\wedge\omega+\hdots+ \omega^n), \\ &=\int_X i\partial\phi \wedge \bar{\partial}\phi\wedge(\omega_{\phi}^{n-1} +\omega_{\phi}^{n-2}\wedge\omega+\hdots+ \omega^n), \\ &\geq 0. \end{align*} \end{proof}

With these definitions in place we can define the coercivity of the Mabuchi functional.

\begin{definition}\cite[Section 7.2]{GT3} We say the Mabuchi functional is \emph{coercive} if $$\scM_{\omega}(\phi) \geq a I_{\omega}(\phi) + b,$$ where $a,b\in\R$ are constants independent of $\phi$ with $a>0$. Similarly we say that the Mabuchi functional is \emph{bounded} if for some constant $c\in\R$ we have $$\scM_{\omega}(\phi) \geq c.$$ By Lemma \ref{Ilemma} coercivity of the Mabuchi functional implies boundedness of the Mabuchi functional. \end{definition}

\begin{remark} This notion of coercivity is sometimes also called properness of the Mabuchi functional in the literature.\end{remark}

The following result is well known.

\begin{lemma}\label{normalisation} To prove boundedness or coercivity of the Mabuchi functional, one can restrict to K\"ahler potentials $\phi$ with $\sup_X \phi = 0$. \end{lemma}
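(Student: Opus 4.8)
The plan is to exploit the fact that a Kähler potential is only defined up to an additive constant: since $i\partial\bar\partial c = 0$ for any constant $c$, we have $\omega_{\phi+c} = \omega_\phi$. Thus the whole lemma follows once I show that both $\scM_{\omega}$ and Aubin's functional $I_{\omega}$ are \emph{invariant} under the replacement $\phi \mapsto \phi + c$. Granting this, for an arbitrary $\phi \in \scH(\omega)$ I set $c = -\sup_X \phi$ to obtain a normalised potential $\tilde\phi = \phi - \sup_X\phi$ with $\sup_X \tilde\phi = 0$, and the invariance gives $\scM_{\omega}(\tilde\phi) = \scM_{\omega}(\phi)$ and $I_{\omega}(\tilde\phi) = I_{\omega}(\phi)$. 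Consequently the coercivity estimate $\scM_{\omega}(\phi) \geq a I_{\omega}(\phi) + b$ (and likewise the boundedness estimate) holds for all $\phi$ if and only if it holds for all normalised $\phi$, which is exactly the assertion.

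To verify the invariance I would treat the two functionals in turn. For $I_{\omega}$, replacing $\phi$ by $\phi + c$ changes the value by $c\int_X(\omega^n - \omega_\phi^n)$, which vanishes because $[\omega_\phi] = [\omega]$ forces $\int_X \omega_\phi^n = \int_X \omega^n$. For $\scM_{\omega}$ I would work from the explicit decomposition $\scM_{\omega} = H_{\omega} + E_{\omega}$. The entropy term $H_{\omega}(\phi) = \int_X \log(\omega_\phi^n/\omega^n)\,\omega_\phi^n$ depends on $\phi$ only through $\omega_\phi$, so it is manifestly unchanged. For the energy term $E_{\omega}$, in which $\phi$ appears explicitly, I would compute the change under $\phi \mapsto \phi + c$ directly: since the forms $\omega_\phi$ are unaltered, each summand in the first sum contributes $c\,[\omega]^n$ and each summand in the second contributes $c\,c_1(X).[\omega]^{n-1}$, using $[\omega_\phi] = [\omega]$ and $[\Ric\omega] = c_1(X)$. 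Counting the $n+1$ and $n$ terms respectively, the total change is
\begin{equation*}
c\,\mu(X,[\omega])\,\frac{n}{n+1}(n+1)\int_X[\omega]^n - c\,n\int_X c_1(X).[\omega]^{n-1},
\end{equation*}
and this vanishes precisely by the definition $\mu(X,[\omega]) = \left(\int_X c_1(X).[\omega]^{n-1}\right)/\left(\int_X[\omega]^n\right)$.

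The crux of the argument is this last cancellation in the energy term, where the constant $\frac{n}{n+1}\mu(X,[\omega])$ in the definition of the Mabuchi functional is exactly what makes the topological contributions cancel; everything else is cohomological bookkeeping. I would note that this invariance is in fact automatic from the stated path-independence of $\scM_{\omega}$: since $\phi$ and $\phi + c$ are potentials for the same endpoint $\omega_\phi$, and $\scM_{\omega}$ depends only on $\omega_\phi$, the two values agree. The explicit computation above may nonetheless be worth including as a self-contained check, since it makes transparent the role of the slope.
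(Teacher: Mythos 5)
Your proposal is correct and follows essentially the same route as the paper: both reduce the lemma to the invariance of $\scM_{\omega}$ and $I_{\omega}$ under $\phi \mapsto \phi + c$ and then apply the translation $\phi \mapsto \phi - \sup_X \phi$. The only difference is that the paper asserts the invariance of $\scM_{\omega}$ directly from its variational definition, whereas you verify it by the explicit cancellation in the energy term coming from the definition of the slope $\mu(X,[\omega])$ --- a more detailed check of the same fact, which you also note follows from path-independence.
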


\begin{proof} Note that if $\phi$ is a K\"ahler potential, then so is $\phi+c$ for all $c \in \R$. 

Through its variational definition, one sees that the Mabuchi functional is independent of $\phi\to \phi + c$ for any $c\in\R$, that is, $\scM(\phi) = \scM(\phi+c)$. Under the transformation $\phi \to \phi - \sup_X \phi$, this shows that to prove boundedness of the Mabuchi functional, one can restrict to K\"ahler potentials with $\sup_X \phi = 0$. A direct computation show that $I_{\omega}(\phi)=I_{\omega}(\phi+c)$. 

Since both the Mabuchi functional and Aubin's $I$-functional are invariant under the addition of constants, the transformation $$\phi \to \phi - \sup_X \phi$$ provides the required coercivity result.  \end{proof}

Roughly speaking, to control the Mabuchi functional, we use the positivity of the entropy term to control the energy term. 

\begin{lemma}\cite[Theorem 7.13]{GT3}\label{alphalemma} Suppose $\sup_X \phi = 0$. Then for all $\beta < \alpha(X,L)$, the entropy term satisfies $$H_{\omega}(\phi) \geq -\beta \int_X \phi \omega_{\phi}^n + C,$$ for some $C$ independent of $\phi$. \end{lemma}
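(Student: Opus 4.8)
The plan is to recognise the combination $H_\omega(\phi) + \beta\int_X \phi\,\omega_\phi^n$ as (up to normalising constants) a relative entropy between two probability measures and to exploit its nonnegativity. Writing $f = \omega_\phi^n/\omega^n$ for the smooth positive density of the perturbed volume form, and noting that $\int_X \omega_\phi^n = \int_X \omega^n =: V$ is the fixed topological volume, I would first rewrite
$$H_\omega(\phi) + \beta\int_X \phi\,\omega_\phi^n = \int_X \bigl(\log f + \beta\phi\bigr)\,f\,\omega^n = \int_X \log\bigl(f e^{\beta\phi}\bigr)\,f\,\omega^n.$$
All integrals here are finite since we work in the smooth category, so this algebraic rearrangement is harmless.

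Second, I would introduce the comparison measure $q = Z^{-1} e^{-\beta\phi}\omega^n$, where $Z = \int_X e^{-\beta\phi}\omega^n$ is its normalising constant, together with $p = V^{-1}\omega_\phi^n$. Since $dp/dq = (Z/V)\,f e^{\beta\phi}$, Gibbs' inequality (equivalently, Jensen's inequality for the convex function $t\mapsto t\log t$, i.e. nonnegativity of the Kullback--Leibler divergence) gives
$$0 \;\leq\; \int_X \log\frac{dp}{dq}\,dp \;=\; \frac{1}{V}\int_X \log\Bigl(\frac{Z}{V}\,f e^{\beta\phi}\Bigr)\,f\,\omega^n.$$
Expanding the logarithm and using $\int_X f\,\omega^n = V$, this rearranges to
$$H_\omega(\phi) + \beta\int_X \phi\,\omega_\phi^n \;\geq\; -\,V\log\frac{Z}{V} \;=\; V\log\frac{V}{Z}.$$

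Finally, I would control $Z$ using the alpha invariant. Because $\beta < \alpha(X,L)$ and we have normalised $\sup_X\phi = 0$, the defining property of the alpha invariant furnishes a constant $c$, independent of $\phi$, with $Z = \int_X e^{-\beta(\phi - \sup_X\phi)}\omega^n < c$. Hence $V\log(V/Z) \geq V\log(V/c) =: C$, a constant independent of $\phi$, and the claimed bound $H_\omega(\phi) \geq -\beta\int_X \phi\,\omega_\phi^n + C$ follows.

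The computation is essentially routine once the correct comparison measure is chosen; the only points requiring genuine care are the direction of the relative-entropy inequality and checking that the resulting constant is truly uniform in $\phi$. This uniformity is exactly where the strict inequality $\beta < \alpha(X,L)$ enters: it is what upgrades the qualitative integrability packaged into the alpha invariant to the single bound $Z < c$ holding simultaneously across all K\"ahler potentials, which is the whole point of the lemma.
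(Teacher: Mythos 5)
Your proposal is correct and is essentially the paper's own argument: the paper bounds $\int_X e^{-\beta\phi}\omega^n = \int_X e^{-\log(\omega_\phi^n/\omega^n)-\beta\phi}\,\omega_\phi^n \leq c$ via the alpha invariant and then applies Jensen's inequality for $\log$ against the measure $\omega_\phi^n$, which is exactly your Gibbs/Kullback--Leibler nonnegativity step in different clothing. Your version is in fact slightly more careful about the volume normalisation $V$ (the paper suppresses it in the constant), but the key ideas --- the alpha-invariant bound on $Z$ uniform in $\phi$ and the convexity inequality --- coincide.
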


\begin{proof} We recall Tian's proof. Since we have assumed $\sup_X \phi = 0$, the alpha invariant satisfies $$\alpha(X,[\omega])) = \sup\left\{  \beta: \int_X e^{-\beta\phi}\omega^n < c \right\}.$$ Therefore for any $\beta < \alpha(X,L)$ the integral $\int_X e^{-\beta\phi}\omega^n$ is bounded by some constant $c$ independent of $\phi$. We have $$ \int_X e^{-\beta\phi}\omega^n = \int_X  e^{-\log \frac{\omega_{\phi}^n}{\omega^n}-\beta\phi}\omega_{\phi}^n\leq c,$$ which by Jensen's inequality implies $$\int_X \left(-\log \frac{\omega_{\phi}^n}{\omega^n} - \beta\phi\right)\omega_{\phi}^n \leq \log c.$$ Finally this gives $$\int_X \log \frac{\omega_{\phi}^n}{\omega^n} \omega_{\phi}^n \geq -\beta\int_X \phi \omega_{\phi}^n - \log c,$$ as required with $C = -\log c$.

\end{proof}

The following Lemma will be key to proving coercivity of the Mabuchi functional, rather than just boundedness.

\begin{lemma}\label{toprovecoercivity} Suppose $\sup_X \phi = 0$. Then  $$-\int_X \phi \omega_{\phi}^n\geq I_{\omega}(\phi).$$ \end{lemma}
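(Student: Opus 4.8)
The plan is to reduce the claimed inequality to the elementary sign condition supplied by the normalisation $\sup_X \phi = 0$. First I would write out Aubin's functional using its definition,
$$I_{\omega}(\phi) = \int_X \phi\,\omega^n - \int_X \phi\,\omega_{\phi}^n,$$
so that the quantity to be bounded rearranges as
$$-\int_X \phi\,\omega_{\phi}^n - I_{\omega}(\phi) = -\int_X \phi\,\omega_{\phi}^n - \int_X \phi\,\omega^n + \int_X \phi\,\omega_{\phi}^n = -\int_X \phi\,\omega^n.$$
In other words, after the cancellation of the two $\int_X \phi\,\omega_{\phi}^n$ terms, the entire statement collapses to the single assertion $\int_X \phi\,\omega^n \leq 0$.

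This last inequality is then immediate from the hypothesis: since $\sup_X \phi = 0$ we have $\phi \leq 0$ pointwise, and $\omega^n$ is a positive volume form, so integrating a non-positive function against a positive measure yields $\int_X \phi\,\omega^n \leq 0$. Rearranging recovers $-\int_X \phi\,\omega_{\phi}^n \geq I_{\omega}(\phi)$, as required.

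The only point requiring care is the bookkeeping of signs in the cancellation above; there is no genuine analytic obstacle here. I would stress that the positivity of Aubin's functional from Lemma \ref{Ilemma} is not even needed for this step — the sole input is the normalisation $\sup_X \phi = 0$, and it enters only through the sign of $\phi$. This is exactly why Lemma \ref{normalisation} is invoked beforehand: it licenses the restriction to potentials with $\sup_X \phi = 0$, which is precisely what makes the leftover term $-\int_X \phi\,\omega^n$ non-negative.
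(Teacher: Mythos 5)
Your proof is correct and follows essentially the same route as the paper: both reduce the inequality, via the cancellation of the $\int_X \phi\,\omega_{\phi}^n$ terms in the definition of $I_{\omega}(\phi)$, to the assertion $\int_X \phi\,\omega^n \leq 0$, which follows from $\phi \leq \sup_X \phi = 0$ and the positivity of $\omega^n$. There is nothing to add.
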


\begin{proof} Since $I_{\omega}(\phi)=\int_X \phi (\omega^n - \omega_{\phi}^n)$, we wish to show that $$\int_X \phi \omega^n \leq 0.$$ To see this, note that since $\omega$ is a positive $(1,1)$-form we have \begin{align*}\int_X \phi \omega^n &\leq \int_X\sup_X \phi \omega^n, \\ &= \sup_X \phi \int_X \omega^n, \\ &= 0.\end{align*}\end{proof}

We will also need the following technical Lemma, which will again allow us to utilise the positivity of the $-\int_X \phi \omega_{\phi}^n$ term.

\begin{lemma}\label{LnE} Suppose $\sup_X \phi = 0$. Then $$-n\int_X \phi \omega_{\phi}^n \geq -\sum_{i=1}^{n}  \int_X \phi  \omega^i \wedge \omega_{\phi}^{n-i}.$$ \end{lemma}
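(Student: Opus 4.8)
The plan is to reduce the inequality to a monotonicity statement for the family of integrals $J_i := \int_X \phi\,\omega^i\wedge\omega_\phi^{n-i}$, $i=0,1,\dots,n$, and then to sum. Since the claimed inequality is precisely $-nJ_0 \ge -\sum_{i=1}^n J_i$, that is $\sum_{i=1}^n J_i \ge n J_0$, it suffices to prove that $J_i \ge J_0$ for each $i\ge 1$; this in turn follows immediately once the sequence $(J_i)$ is shown to be non-decreasing.

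To establish monotonicity I would examine a single consecutive difference. Using the elementary identity $\omega^i\wedge\omega_\phi^{n-i}-\omega^{i-1}\wedge\omega_\phi^{n-i+1} = \omega^{i-1}\wedge\omega_\phi^{n-i}\wedge(\omega-\omega_\phi)$ together with $\omega-\omega_\phi = -i\partial\bar\partial\phi$, one finds
\[
J_i - J_{i-1} = \int_X \phi\,(-i\partial\bar\partial\phi)\wedge\omega^{i-1}\wedge\omega_\phi^{n-i}.
\]
Now I would integrate by parts exactly as in the proof of Lemma \ref{Ilemma}, transferring the $\partial\bar\partial$ onto a second factor of $\phi$. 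As the form $\omega^{i-1}\wedge\omega_\phi^{n-i}$ is closed, the exact boundary terms integrate to zero on the compact manifold $X$ and one obtains $J_i - J_{i-1} = \int_X i\partial\phi\wedge\bar\partial\phi\wedge\omega^{i-1}\wedge\omega_\phi^{n-i}$.

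The positivity of this last expression is the crux, and it is straightforward: $i\partial\phi\wedge\bar\partial\phi$ is a non-negative $(1,1)$-form, while $\omega^{i-1}\wedge\omega_\phi^{n-i}$ is a wedge of positive $(1,1)$-forms and hence a non-negative $(n-1,n-1)$-form, so their product is a non-negative top form. Thus $J_i\ge J_{i-1}$ for every $i$, whence $J_i\ge J_0 = \int_X\phi\,\omega_\phi^n$ for all $i\ge 1$; summing over $i=1,\dots,n$ yields $\sum_{i=1}^n J_i \ge n\int_X\phi\,\omega_\phi^n$, which is the desired inequality.

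The step I expect to require the most care is the integration by parts: one must verify that the $(n-1,n-1)$-form $\omega^{i-1}\wedge\omega_\phi^{n-i}$ wedged against $\phi\,i\partial\bar\partial\phi$ is genuinely $d$-closed, so that no error terms survive when Stokes' theorem is applied, and one must keep track of the sign so that the resulting quantity comes out as the manifestly non-negative $\int_X i\partial\phi\wedge\bar\partial\phi\wedge(\cdot)$ rather than its negative. I would note, finally, that this argument does not in fact invoke the normalisation $\sup_X\phi = 0$; that hypothesis is retained here only for uniformity with the surrounding lemmas.
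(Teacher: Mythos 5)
Your proposal is correct and is essentially the paper's own argument: the paper proves $J_i \ge J_0$ directly by factoring $\omega^i - \omega_\phi^i = (\omega-\omega_\phi)\wedge\sum_{j=1}^{i}\omega^{i-j}\wedge\omega_\phi^{j-1}$, which produces exactly the same non-negative integrals $\int_X i\partial\phi\wedge\bar\partial\phi\wedge\omega^{k-1}\wedge\omega_\phi^{n-k}$ that your telescoping through consecutive differences $J_i - J_{i-1}$ yields. Your closing observation that the normalisation $\sup_X\phi = 0$ is never used is also accurate, as the paper's proof likewise makes no use of it.
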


\begin{proof}

We write \begin{align*}-n\int_X \phi \omega_{\phi}^n + \sum_{i=1}^{n}  \int_X \phi  \omega^i \wedge \omega_{\phi}^{n-i} &=  \sum_{i=1}^{n} \int_X \phi (-\omega_{\phi}^n + \omega^i \wedge \omega_{\phi}^{n-i}), \\ &=  \sum_{i=1}^{n} \int_X \phi \omega_{\phi}^{n-i}\wedge( \omega^i  -  \omega_{\phi}^{i}), \end{align*} and show that each summand is positive separately. Using the same technique as Lemma \ref{Ilemma}, we have \begin{align*}\int_X \phi \omega_{\phi}^{n-i}\wedge\left( \omega^i  -  \omega_{\phi}^{i}\right) &= \int_X \phi \omega_{\phi}^{n-i}\wedge( \omega -  \omega_{\phi})\wedge\left( \sum_{j=1}^{i}\omega^{i-j} \wedge \omega_{\phi}^{j-1}\right), \\ &=\int_X \phi( -i\partial\bar{\partial}\phi) \wedge \omega_{\phi}^{n-i}\wedge\left( \sum_{j=1}^{i}\omega^{i-j} \wedge \omega_{\phi}^{j-1}\right), \\ &=\int_X i\partial\phi \wedge \bar{\partial}\phi\wedge\omega_{\phi}^{n-i}\wedge\left( \sum_{j=1}^{i}\omega^{i-j} \wedge \omega_{\phi}^{j-1}\right), \\ &\geq 0. \end{align*} 

 \end{proof}

We now proceed to the proof of Theorem \ref{intromaintheorem}.

\begin{proof}[Proof of Theorem 1.3] 

We assume $G$ consists of just the identity automorphism, for notational convenience. The general case is the same.

By Lemma \ref{normalisation}, it suffices to show that the Mabuchi functional is coercive with respect to all $\phi$ satisfying $\sup_X \phi = 0.$ 

The Mabuchi functional is given as \begin{align*}\scM(\phi) =\int_X \log\frac{\omega_{\phi}^n}{\omega^n}\omega_{\phi}^n& + \frac{n\mu(X,[\omega])}{n+1}\int_X \phi \left(\sum^n_{i=0} \omega^i \wedge \omega^{n-i}_{\phi}\right) - \\ & -\int_X \phi \Ric \omega \wedge \left(\sum^{n-1}_{i=0} \omega^i \wedge \omega^{n-1-i}_{\phi}\right).\end{align*}

Let $\alpha(X,L) = \frac{n}{n+1}\mu(X,[\omega]) + (n+2)\epsilon$, where by assumption $\epsilon>0$. Since $\sup_X \phi = 0$, Lemma \ref{alphalemma} implies $$H(\phi) \geq -\left(\frac{n}{n+1}\mu(X,[\omega]) + (n+1)\epsilon\right)\int_X \phi \omega_{\phi}^n + c.$$ Here $c$ is a constant independent of $\phi$. Using this, we see \begin{align*}&\scM(\phi) \geq -\left(\frac{n}{n+1}\mu(X,[\omega]) + (n+1)\epsilon\right)\int_X \phi \omega_{\phi}^n + \\ & +  \frac{n\mu(X,[\omega])}{n+1}\int_X \phi \left(\sum^n_{i=0} \omega^i \wedge \omega^{n-i}_{\phi}\right) -\int_X \phi \Ric \omega \wedge \left(\sum^{n-1}_{i=0} \omega^i \wedge \omega^{n-1-i}_{\phi}\right) + c. \end{align*} This gives \begin{align*}\scM(\phi) \geq -(n+1)\epsilon\int_X \phi \omega_{\phi}^n& +\frac{n\mu(X,[\omega])}{n+1}\int_X \phi\omega \wedge \left(\sum^{n-1}_{i=0} \omega^i \wedge \omega^{n-1-i}_{\phi}\right)  - \\ &-\int_X \phi \Ric \omega \wedge \left(\sum^{n-1}_{i=0} \omega^i \wedge \omega^{n-1-i}_{\phi}\right)+c.\end{align*} 

From Lemma \ref{LnE}, we see \begin{align*}\scM(\phi) \geq -\epsilon\int_X \phi \omega_{\phi}^n& +\frac{n\mu(X,[\omega])}{n+1}\int_X \phi\omega \wedge \left(\sum^{n-1}_{i=0} \omega^i \wedge \omega^{n-1-i}_{\phi}\right)  - \\ &-\int_X \phi (\Ric \omega + \epsilon \omega) \wedge \left(\sum^{n-1}_{i=0} \omega^i \wedge \omega^{n-1-i}_{\phi}\right) + c.\end{align*} 

Write $$\Ric\omega + \epsilon \omega = \frac{n}{n+1}\mu(X,[\omega])\omega + \eta,$$ so that $\eta-\epsilon \omega \in \left(c_1(X)-\frac{n}{n+1}\mu(X,[\omega]) [\omega]\right)$. By assumption this class is nef, since $\omega$ is positive this implies that there exists a $\xi \in [\eta]$ which is positive. Using the $\partial\bar{\partial}$-lemma write $$\eta = \xi + i\partial\bar{\partial} \psi.$$ We can therefore write the Ricci curvature of $\omega$ as $$\Ric\omega + \epsilon \omega = \frac{n}{n+1}\mu(X,[\omega])\omega + \xi+ i\partial\bar{\partial} \psi.$$ After some cancellation the Mabuchi functional therefore has a lower bound  $$\scM(\phi) \geq-\epsilon\int_X \phi \omega_{\phi}^n -\int_X \phi (\xi + i\partial\bar{\partial} \psi) \wedge \left(\sum^{n-1}_{i=0} \omega^i \wedge \omega^{n-1-i}_{\phi}\right) + c.$$ 

Lemma \ref{toprovecoercivity} gives $-\epsilon\int_X \phi \omega_{\phi}^n \geq \epsilon I_{\omega}(\phi)$. The Mabuchi functional is therefore coercive as long as we can bound the remaining terms by a constant. We bound the two remaining terms individually.

We first consider the $\xi$ term. Since $\xi$ is positive and $\sup_X \phi=0$, we have \begin{align*} \int_X\phi \xi \wedge \left(\sum^{n-1}_{i=0} \omega^i \wedge \omega^{n-1-i}_{\phi}\right) &\leq \int_X \sup_X \phi \xi \wedge \left(\sum^{n-1}_{i=0} \omega^i \wedge \omega^{n-1-i}_{\phi}\right), \\ &=\sup_X \phi\int_X  \xi \wedge \left(\sum^{n-1}_{i=0} \omega^i \wedge \omega^{n-1-i}_{\phi}\right), \\ &= 0.\end{align*} 

Finally, we bound the $\psi$ term. \begin{align*}\int_X \phi i\partial\bar{\partial} \psi\wedge \left(\sum^{n-1}_{i=0} \omega^i \wedge \omega^{n-1-i}_{\phi}\right) &= \int_X \psi i\partial\bar{\partial} \phi\wedge \left(\sum^{n-1}_{i=0} \omega^i \wedge \omega^{n-1-i}_{\phi}\right), \\ &= \int_X \psi (\omega_{\phi}-\omega)\wedge \left(\sum^{n-1}_{i=0} \omega^i \wedge \omega^{n-1-i}_{\phi}\right), \\ &= \int_X \psi (\omega_{\phi}^n-\omega^n), \\ &\leq \int_X (\sup_X \psi) \omega_{\phi}^n-\int_X (\inf_X \psi)\omega^n,\\ &=\sup_X \psi \int_X  \omega_{\phi}^n-\inf_X \psi\int_X\omega^n. \end{align*} Note that the construction of $\psi$ is independent of $\phi$ and that the integrals $\int_X  \omega_{\phi}^n$ and $\int_X\omega^n$ are topological invariants, so the integral is bounded above by a constant independent of $\phi$. This completes the proof that the Mabuchi functional is coercive. 

\end{proof}

Note that in the proof we essentially ignore the term of the form $-\epsilon\int_X \phi \omega_{\phi}^n$. Using Lemma \ref{LnE} to utilise this term, a small variation on the above arguments gives the following generalisation of Theorem \ref{intromaintheorem}.

\begin{theorem}\label{extension} Write $$\alpha(X,[\omega]) = \frac{n}{n+1} \mu(X,[\omega]) + \epsilon([\omega]),$$ and assume $\epsilon([\omega])>0$. If $$c_1(X) + \left(\frac{\epsilon([\omega])}{n} - \frac{n}{n+1}\mu(X,[\omega])\right)[\omega]$$ is a K\"ahler class, then the Mabuchi functional is coercive on the space of K\"ahler potentials for $(X,[\omega])$.\end{theorem}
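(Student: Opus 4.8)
The plan is to adapt the proof of Theorem \ref{intromaintheorem}, the point being to exploit the term $-\epsilon\int_X\phi\omega_{\phi}^n$ that was simply discarded there. Write $A = \frac{n}{n+1}\mu(X,[\omega])$ and $\epsilon = \epsilon([\omega])$, so that $\alpha(X,[\omega]) = A + \epsilon$ with $\epsilon>0$. By Lemma \ref{normalisation} I may assume $\sup_X\phi = 0$. For any $\beta < A + \epsilon$, Lemma \ref{alphalemma} bounds the entropy term by $H(\phi) \geq -\beta\int_X\phi\omega_{\phi}^n + C$. Choosing $A < \beta < A + \epsilon$ and writing $\beta = A + \gamma$ with $0<\gamma<\epsilon$, the $i=0$ summand of the energy term cancels against this bound so that the coefficient of $\int_X\phi\omega_{\phi}^n$ becomes $-\gamma$; after reindexing $\sum_{i=1}^n\omega^i\wedge\omega_{\phi}^{n-i} = \omega\wedge\sum_{i=0}^{n-1}\omega^i\wedge\omega_{\phi}^{n-1-i}$ this reads
$$\scM(\phi) \geq -\gamma\int_X\phi\omega_{\phi}^n + A\int_X\phi\,\omega\wedge\Big(\sum_{i=0}^{n-1}\omega^i\wedge\omega_{\phi}^{n-1-i}\Big) - \int_X\phi\,\Ric\omega\wedge\Big(\sum_{i=0}^{n-1}\omega^i\wedge\omega_{\phi}^{n-1-i}\Big) + C.$$

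The key step is to feed most of the term $-\gamma\int_X\phi\omega_{\phi}^n$ back through Lemma \ref{LnE}, rather than bounding it crudely. Splitting $-\gamma\int_X\phi\omega_{\phi}^n = -\gamma_1\int_X\phi\omega_{\phi}^n - cn\int_X\phi\omega_{\phi}^n$ with $\gamma = \gamma_1 + cn$ and $\gamma_1,c>0$, Lemma \ref{LnE} gives $-cn\int_X\phi\omega_{\phi}^n \geq -c\int_X\phi\,\omega\wedge\sum_{i=0}^{n-1}\omega^i\wedge\omega_{\phi}^{n-1-i}$. Absorbing this into the $\omega$-term lowers its coefficient from $A$ to $A-c$, and recombining with the Ricci term yields
$$\scM(\phi) \geq \gamma_1 I_{\omega}(\phi) - \int_X\phi\,\big(\Ric\omega - (A-c)\omega\big)\wedge\Big(\sum_{i=0}^{n-1}\omega^i\wedge\omega_{\phi}^{n-1-i}\Big) + C,$$
where I have also used Lemma \ref{toprovecoercivity} on the surviving $-\gamma_1\int_X\phi\omega_{\phi}^n$. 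The effective class is $[\Ric\omega - (A-c)\omega] = c_1(X) + (c-A)[\omega]$, which tends to the hypothesis class $c_1(X) + (\frac{\epsilon}{n} - A)[\omega]$ as $c\to\frac{\epsilon}{n}$.

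The main obstacle is reconciling the strict inequalities: I need $c$ close to $\frac{\epsilon}{n}$ to keep the effective class positive, yet the requirement $\gamma_1 = \gamma - cn>0$ forces $c < \gamma/n < \epsilon/n$ strictly. This is exactly where the openness of the K\"ahler cone enters. Since $c_1(X) + (\frac{\epsilon}{n} - A)[\omega]$ is K\"ahler by hypothesis, the class $c_1(X) + (c-A)[\omega] = \big(c_1(X) + (\frac{\epsilon}{n} - A)[\omega]\big) - (\frac{\epsilon}{n} - c)[\omega]$ remains K\"ahler for all $c$ in a half-open interval $(\frac{\epsilon}{n} - t_0, \frac{\epsilon}{n}]$ with $t_0>0$; choosing $\gamma$ close enough to $\epsilon$ and then $c < \gamma/n$ inside this interval makes $\gamma_1>0$ while keeping the class K\"ahler. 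From here the argument closes exactly as in Theorem \ref{intromaintheorem}: by the $\partial\bar{\partial}$-lemma write $\Ric\omega - (A-c)\omega = \xi + i\partial\bar{\partial}\psi$ with $\xi$ a K\"ahler form, so that the $\xi$-contribution is bounded below by $0$ since $\sup_X\phi=0$ and $\xi$ is positive, while integrating by parts turns the $\psi$-contribution into $-\int_X\psi(\omega_{\phi}^n - \omega^n)$, bounded by a constant independent of $\phi$ because the volumes are topological. This leaves $\scM(\phi) \geq \gamma_1 I_{\omega}(\phi) + (\text{const})$ with $\gamma_1>0$, which is the required coercivity.
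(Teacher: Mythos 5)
Your proposal is correct and is precisely the argument the paper intends: the paper leaves Theorem \ref{extension} as ``a small variation on the above arguments'' using Lemma \ref{LnE} to exploit the leftover term $-\epsilon\int_X\phi\,\omega_{\phi}^n$, and your write-up carries out exactly that variation, splitting the entropy surplus into a piece fed through Lemma \ref{LnE} (to shift the required class to $c_1(X)+(c-\tfrac{n}{n+1}\mu)[\omega]$) and a residual $\gamma_1$-piece for coercivity via Lemma \ref{toprovecoercivity}. Your handling of the strict inequalities via openness of the K\"ahler cone correctly fills in the bookkeeping the paper omits.
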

 
Remark that the $\epsilon([\omega])$ scales such that the criteria of Theorem \ref{extension} are independent of scaling. This small extension is similar to the K-stability situation of Theorem \ref{kstab} \cite[Remark 12]{RD1}.

\section{Continuity of the alpha invariant}\label{continuitysection}

In this section we prove Theorem \ref{continuity}, which states that the alpha invariant is a continuous function on the K\"ahler cone of a K\"ahler manifold. This generalises a result of the author in the case $[\omega]=c_1(L)$ for some ample line bundle $L$, proven using techniques from birational geometry \cite[Proposition 4.2]{RD1}. To prove this result, we use another definition of the alpha invariant involving complex singularity exponents of plurisubharmonic (psh) functions.

\begin{definition}\cite{DemKoll} Fix a K\"ahler manifold $(X,\omega)$. We say that an $L^1$ function $\varphi$ is $\omega$\emph{-psh} if if it is upper semicontinuous and $\omega + i\partial\bar{\partial}\varphi\geq 0$ in the sense of currents. The \emph{complex singularity exponent} of $\varphi$ is defined as $$c(\varphi) = \sup \left\{ \beta>0 \ |\ \int_X e^{-\beta\varphi}\omega^n<\infty \right\}.$$\end{definition}

\begin{proposition}[Demailly]\label{csealpha}\cite[Proposition 8.1]{VT} The alpha invariant satisfies $$\alpha(X,[\omega])=\inf \{c(\varphi) \ |\ \varphi \textrm{ is } \omega\textrm{-psh}\}.$$\end{proposition}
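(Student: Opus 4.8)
The plan is to prove the two inequalities separately, writing $\gamma := \inf\{c(\varphi)\ |\ \varphi \textrm{ is } \omega\textrm{-psh}\}$ for the right-hand side and abbreviating $\alpha := \alpha(X,[\omega])$. The key structural observation is that a smooth potential is bounded, so $c(\phi)=+\infty$; the finite singularity exponents are produced only by singular $\omega$-psh functions, and the content of the proposition is that the uniform integral bound defining $\alpha$ is equivalent to controlling these singular exponents.

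First I would establish $\gamma\geq\alpha$. Fix $\beta<\beta'<\alpha$ and an arbitrary $\omega$-psh $\varphi$, normalised (using the invariance of $c$ under additive constants) so that $\sup_X\varphi=0$. By Demailly's regularisation theorem there is a sequence of smooth functions $\varphi_j\downarrow\varphi$ with $\omega+i\partial\bar{\partial}\varphi_j\geq -\epsilon_j\omega$ for some $\epsilon_j\to 0$. For $0<t<1$ the function $\frac{t}{1+\epsilon_j}\varphi_j$ satisfies $\omega+i\partial\bar{\partial}(\frac{t}{1+\epsilon_j}\varphi_j)\geq (1-t)\omega>0$, hence lies in $\scH(\omega)$, so the bound defining $\alpha$ applies to it at exponent $\beta'$. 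Choosing $t$ close to $1$ and $j$ large so that $\beta' t/(1+\epsilon_j)\geq\beta$, and using $\varphi_j-\sup_X\varphi_j\leq 0$ to compare exponentials, I obtain $\int_X e^{-\beta(\varphi_j-\sup_X\varphi_j)}\omega^n<c$ uniformly in $j$. Since $\sup_X\varphi_j\geq 0$ is bounded and $e^{-\beta\varphi_j}\uparrow e^{-\beta\varphi}$, monotone convergence gives $\int_X e^{-\beta\varphi}\omega^n<\infty$, so $c(\varphi)\geq\beta$. Letting $\beta\uparrow\alpha$ and taking the infimum over $\varphi$ yields $\gamma\geq\alpha$.

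For the reverse inequality $\gamma\leq\alpha$ I would argue by compactness. Fix $\beta<\gamma$; the goal is a uniform bound $\int_X e^{-\beta\phi}\omega^n\leq C$ over all normalised smooth potentials, which gives $\beta\leq\alpha$. Consider the set $K$ of all $\omega$-psh functions with $\sup_X\varphi=0$, which is compact in the $L^1$ topology. For each $\psi\in K$ one has $c(\psi)\geq\gamma>\beta$, so the uniform (effective) form of the Demailly--Koll\'ar semicontinuity theorem for complex singularity exponents provides an $L^1$-neighbourhood $U_\psi$ of $\psi$ and a constant $C_\psi$ with $\int_X e^{-\beta\varphi}\omega^n\leq C_\psi$ for every $\omega$-psh $\varphi\in U_\psi$. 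Extracting a finite subcover of $K$ and taking $C$ to be the maximum of the resulting finitely many constants produces the desired uniform bound, in particular over smooth normalised potentials. Hence $\beta\leq\alpha$, and letting $\beta\uparrow\gamma$ gives $\gamma\leq\alpha$.

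The two soft inputs---Demailly's regularisation of $\omega$-psh functions and the $L^1$-compactness of normalised $\omega$-psh functions---are standard. The main obstacle, and the genuinely analytic heart of the argument, is the uniform semicontinuity statement used in the second step: one needs not merely that each $c(\varphi)$ is finite, nor even lower semicontinuity alone, but that the integrals $\int_X e^{-\beta\varphi}\omega^n$ stay uniformly bounded on $L^1$-neighbourhoods. This uniformity, combined with compactness, is exactly what upgrades the pointwise finiteness encoded in $\gamma$ to the uniform integral bound encoded in $\alpha$, and invoking it correctly is where the care lies.
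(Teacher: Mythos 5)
The paper contains no proof for you to be compared against: this proposition is quoted from the literature (it is Demailly's result, cited as Proposition 8.1 of the reference \cite{VT}), so your argument can only be judged on its own terms. On those terms it is correct, and it is in essence the standard argument for this equivalence. For $\inf_\varphi c(\varphi)\geq\alpha(X,[\omega])$ the real issue is bridging from singular $\omega$-psh functions to the smooth potentials in $\scH(\omega)$ over which $\alpha$ is defined, and your combination of Demailly regularisation ($\varphi_j\downarrow\varphi$ smooth with $\omega+i\partial\bar{\partial}\varphi_j\geq-\epsilon_j\omega$), the rescaling $\frac{t}{1+\epsilon_j}\varphi_j$ (whose lower bound $(1-t)\omega$ you compute correctly), and monotone convergence handles this properly; the only unstated detail is that $0\leq\sup_X\varphi_j\leq\sup_X\varphi_1$, which is what legitimises both your comparison of exponentials and your boundedness claim, and it holds because the $\varphi_j$ decrease to $\varphi$ with $\sup_X\varphi=0$. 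For the reverse inequality you correctly diagnose the crux: one needs the effective Demailly--Koll\'ar statement ($L^1$-convergence of $e^{-\beta\varphi_j}$ when $\beta<c(\varphi)$, hence, by contradiction from the sequential form, uniform bounds of $\int_X e^{-\beta\varphi}\omega^n$ on $L^1$-neighbourhoods), not mere lower semicontinuity of $c(\cdot)$, since the latter gives finiteness of each individual integral but no uniform constant, and a uniform constant is exactly what the definition of $\alpha$ demands. Combined with $L^1$-compactness of $\{\varphi\ \omega\text{-psh}:\sup_X\varphi=0\}$ and a finite subcover, this yields the uniform bound over all normalised potentials, in particular over $\scH(\omega)$, so $\beta\leq\alpha(X,[\omega])$ for every $\beta<\inf_\varphi c(\varphi)$. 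Your proof does outsource the analytic depth to three cited results (regularisation, effective semicontinuity, compactness of normalised $\omega$-psh families), but all three are standard, they are invoked accurately, and this is how the proof in the literature proceeds; I see no gap.
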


We will require the following. 

\begin{lemma}\label{scaling}\cite[1.4 (4)]{DemKoll} For $\lambda\in\R_{>0}$, the alpha invariant satisfies the scaling property $$\alpha(X,[\lambda\omega])=\lambda^{-1}\alpha(X,[\omega]).$$\end{lemma}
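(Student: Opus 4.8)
The plan is to deduce the scaling property directly from Demailly's characterization of the alpha invariant in Proposition \ref{csealpha} as an infimum of complex singularity exponents. The starting observation is that rescaling the K\"ahler form sets up a bijection between the relevant classes of potentials: if $\varphi$ is $\omega$-psh, meaning $\omega + i\partial\bar{\partial}\varphi \geq 0$ in the sense of currents, then for $\lambda>0$ the function $\lambda\varphi$ satisfies $\lambda\omega + i\partial\bar{\partial}(\lambda\varphi) = \lambda(\omega + i\partial\bar{\partial}\varphi) \geq 0$, so $\lambda\varphi$ is $\lambda\omega$-psh; conversely, dividing by $\lambda$ sends $\lambda\omega$-psh functions back to $\omega$-psh functions. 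Thus $\varphi \mapsto \lambda\varphi$ is a bijection between the two classes, and it suffices to understand how the complex singularity exponent transforms under it.

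For the main computation, I would fix an $\omega$-psh function $\varphi$ and evaluate the exponent of $\lambda\varphi$ with respect to $\lambda\omega$. Since $(\lambda\omega)^n = \lambda^n\omega^n$, for any $\beta>0$ we have
$$\int_X e^{-\beta(\lambda\varphi)}(\lambda\omega)^n = \lambda^n\int_X e^{-(\beta\lambda)\varphi}\omega^n,$$
and the left-hand integral is finite precisely when $\int_X e^{-(\beta\lambda)\varphi}\omega^n$ is finite, the constant factor $\lambda^n$ being irrelevant. Hence the set of admissible exponents for $\lambda\varphi$ relative to $\lambda\omega$ is exactly $\lambda^{-1}$ times the set of admissible exponents for $\varphi$ relative to $\omega$, and taking suprema gives that the complex singularity exponent of $\lambda\varphi$ (with respect to $\lambda\omega$) equals $\lambda^{-1}$ times that of $\varphi$ (with respect to $\omega$).

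Finally I would take the infimum over all potentials. Using the bijection above, the infimum of the $\lambda\omega$-singularity exponents over $\lambda\omega$-psh functions equals the infimum over $\omega$-psh functions $\varphi$ of $\lambda^{-1}$ times the $\omega$-singularity exponent of $\varphi$; since the positive factor $\lambda^{-1}$ pulls out of the infimum, Proposition \ref{csealpha} then yields $\alpha(X,[\lambda\omega]) = \lambda^{-1}\alpha(X,[\omega])$, as desired. There is no serious obstacle here; the only point requiring care is to keep straight the three occurrences of $\lambda$—in the volume form $\lambda^n\omega^n$, in the rescaled potential $\lambda\varphi$, and in the exponent $\beta$—and to confirm that the change of variables $\beta \mapsto \beta\lambda$ is exactly what converts the scaling of the form into the claimed $\lambda^{-1}$ scaling of the invariant. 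One could equally run the argument from the original definition of $\alpha$ via $\sup_X$-normalized smooth potentials, where the same bijection $\phi \mapsto \lambda\phi$ and the identical change of variables in $\beta$ produce the result.
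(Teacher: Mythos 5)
Your proof is correct, but it is worth noting that the paper does not actually prove this lemma at all: it is quoted directly from Demailly--Koll\'ar \cite[1.4 (4)]{DemKoll}, so your self-contained derivation is a genuine addition rather than a reproduction of the paper's argument. Your route — transporting potentials via the bijection $\varphi \mapsto \lambda\varphi$ between $\omega$-psh and $\lambda\omega$-psh functions, observing that the volume factor $\lambda^n$ is harmless for finiteness, and rescaling the exponent $\beta \mapsto \beta\lambda$ before taking the infimum in Proposition \ref{csealpha} — is exactly in the spirit of how the paper later uses that characterisation (compare the proof of Lemma \ref{addingpositive}), so it fits the paper's toolkit well. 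The one dependency to flag is that your argument leans on Proposition \ref{csealpha}, which is itself a nontrivial quoted result; your closing remark that the same bijection and change of variables work directly with the original definition of $\alpha(X,[\omega])$ via $\sup_X$-normalised smooth K\"ahler potentials (where $\sup_X(\lambda\phi)=\lambda\sup_X\phi$ preserves the normalisation and the uniform bound $c$ is merely rescaled by $\lambda^n$) is the more elementary option and avoids that dependency entirely; either version is complete and correct.
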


We will also need the following additivity property of the alpha invariant.

\begin{lemma}\label{addingpositive} Let $\xi$ be a K\"ahler class on a K\"ahler manifold $(X,\omega)$. Then $$\alpha(X,[\omega+\xi])\leq \alpha(X,[\omega]).$$\end{lemma}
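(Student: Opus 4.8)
The plan is to use Demailly's characterisation of the alpha invariant recorded in Proposition \ref{csealpha}, namely $\alpha(X,[\omega]) = \inf\{c(\varphi) \ |\ \varphi \textrm{ is } \omega\textrm{-psh}\}$, together with the elementary observation that enlarging the K\"ahler class enlarges the set of admissible psh functions, so that the defining infimum can only decrease. Lemma \ref{scaling} is not needed here; the whole argument is a monotonicity statement for the infimum.

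First I would fix a K\"ahler form $\theta$ representing the class $\xi$, so that $\omega+\theta$ represents $[\omega+\xi]$. The key point is that any $\omega$-psh function $\varphi$ is automatically $(\omega+\theta)$-psh. Indeed, $(\omega+\theta)+i\partial\bar{\partial}\varphi = (\omega+i\partial\bar{\partial}\varphi)+\theta$, and since $\theta>0$ while $\omega+i\partial\bar{\partial}\varphi\geq 0$ in the sense of currents, the sum is $\geq 0$. Hence the set of $\omega$-psh functions is contained in the set of $(\omega+\theta)$-psh functions.

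Second, I would check that the complex singularity exponent $c(\varphi)$ is intrinsic to $\varphi$ and does not depend on which smooth positive volume form is used to define it. On the compact manifold $X$ the volume forms $\omega^n$ and $(\omega+\theta)^n$ are comparable, with ratio bounded above and below by positive constants; therefore $\int_X e^{-\beta\varphi}\omega^n<\infty$ if and only if $\int_X e^{-\beta\varphi}(\omega+\theta)^n<\infty$, and $c(\varphi)$ takes the same value against either measure. I expect this comparability step to be the only genuinely technical point, although it is standard for smooth volume forms on a compact manifold.

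Combining the two observations, the infimum defining $\alpha(X,[\omega+\xi])$ runs over a set containing the one defining $\alpha(X,[\omega])$, while the quantity $c$ being minimised agrees on the overlap. Since an infimum over a larger set cannot exceed the infimum over a subset,
\[ \alpha(X,[\omega+\xi]) = \inf_{\psi \ (\omega+\theta)\textrm{-psh}} c(\psi) \leq \inf_{\varphi \ \omega\textrm{-psh}} c(\varphi) = \alpha(X,[\omega]), \]
which is the desired inequality.
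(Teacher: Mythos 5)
Your proof is correct and follows essentially the same route as the paper's: both rest on Demailly's characterisation (Proposition \ref{csealpha}), the observation that every $\omega$-psh function is automatically $(\omega+\xi)$-psh, and monotonicity of the infimum over the larger set of competitors. The one point handled differently is the comparison of singularity exponents against the two reference measures: the paper expands $(\omega+\xi)^n$ binomially and uses positivity of each mixed term to get the pointwise domination $\omega^n\leq(\omega+\xi)^n$ (so integrability against the larger measure forces integrability against $\omega^n$), whereas you invoke two-sided comparability of smooth positive volume forms on the compact manifold, which gives the slightly stronger conclusion that $c(\varphi)$ is identical for either measure; both justifications are valid.
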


\begin{proof} This follows by using the definition provided in Proposition \ref{csealpha}. We show that for every $\omega$-psh function $\varphi$, there is an $(\omega+\eta)$-psh function $\psi$ with $c(\psi)\leq c(\varphi)$. In fact we can take $\psi=\varphi$, since $\omega+\xi+i\partial\bar{\partial}\varphi\geq 0$ as $\xi$ is positive and $\varphi$ is $\omega$-psh.

We therefore wish to show that if $$\int_X e^{-\beta\varphi}(\omega+\xi)^n<\infty,$$ then also $$\int_X e^{-\beta\varphi}\omega^n<\infty.$$ We expand the first integral as $$\sum^n_{j=0}{n \choose j} \int_X e^{-\beta\varphi} \omega^{n-i}\wedge\xi^i.$$ As $\omega$ and $\xi$ are both positive, and $e^{-\beta\varphi}$ is a positive function, if this sum converges then certainly its $i=0$ term $\int_X e^{-\beta\varphi}\omega^n$ is finite. \end{proof}

We can now prove the main result of this section.

\begin{theorem} The alpha invariant is a continuous function on the K\"ahler cone.\end{theorem}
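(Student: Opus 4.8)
The plan is to prove continuity of the alpha invariant on the K\"ahler cone by combining the scaling property (Lemma \ref{scaling}) with the monotonicity property (Lemma \ref{addingpositive}) to squeeze the alpha invariant of a nearby class between explicit multiples of the alpha invariant at a fixed class. The essential observation is that the K\"ahler cone is an open convex cone, so for any fixed K\"ahler class $[\omega_0]$ and any nearby class $[\omega]$, I can find a small $\delta>0$ such that both $[\omega]-\delta[\omega_0]$ and $(1+\delta)[\omega_0]-[\omega]$ are again K\"ahler classes; equivalently $[\omega]$ lies in the order interval between $(1-\delta)[\omega_0]$-type and $(1+\delta)[\omega_0]$-type classes once we quantify closeness correctly.

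First I would fix a reference K\"ahler class $[\omega_0]$ and let $[\omega]$ range over classes close to it. The key algebraic step is this: if $[\omega] - \lambda[\omega_0]$ is K\"ahler for some $\lambda>0$, then by Lemma \ref{addingpositive} applied with base class $\lambda[\omega_0]$ and added K\"ahler class $[\omega]-\lambda[\omega_0]$, we get $\alpha(X,[\omega])\le \alpha(X,[\lambda\omega_0]) = \lambda^{-1}\alpha(X,[\omega_0])$, using Lemma \ref{scaling} for the last equality. Symmetrically, if $\mu[\omega_0]-[\omega]$ is K\"ahler for some $\mu>0$, then writing $\mu[\omega_0] = [\omega] + (\mu[\omega_0]-[\omega])$ and applying Lemma \ref{addingpositive} with base $[\omega]$ gives $\alpha(X,[\mu\omega_0]) \le \alpha(X,[\omega])$, i.e. $\mu^{-1}\alpha(X,[\omega_0]) \le \alpha(X,[\omega])$. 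Together these yield the two-sided estimate
\begin{equation*}
\mu^{-1}\alpha(X,[\omega_0]) \le \alpha(X,[\omega]) \le \lambda^{-1}\alpha(X,[\omega_0]),
\end{equation*}
valid whenever $[\omega]-\lambda[\omega_0]$ and $\mu[\omega_0]-[\omega]$ are both K\"ahler.

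Next I would make the continuity quantitative. Since the K\"ahler cone is open, for each $[\omega]$ sufficiently close to $[\omega_0]$ I can choose admissible $\lambda,\mu$ both tending to $1$ as $[\omega]\to[\omega_0]$: concretely, as $[\omega]\to[\omega_0]$ one may take $\lambda = 1-\epsilon$ and $\mu = 1+\epsilon$ with $\epsilon\to 0$, since $[\omega]-(1-\epsilon)[\omega_0] = ([\omega]-[\omega_0])+\epsilon[\omega_0]$ and $(1+\epsilon)[\omega_0]-[\omega] = \epsilon[\omega_0]-([\omega]-[\omega_0])$ are both K\"ahler once $\epsilon$ exceeds the size of the perturbation $[\omega]-[\omega_0]$ relative to the cone's openness around $[\omega_0]$. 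As $\lambda,\mu\to 1$, both bounds $\mu^{-1}\alpha(X,[\omega_0])$ and $\lambda^{-1}\alpha(X,[\omega_0])$ converge to $\alpha(X,[\omega_0])$, forcing $\alpha(X,[\omega])\to\alpha(X,[\omega_0])$ by the squeeze. This establishes continuity at the arbitrary point $[\omega_0]$, hence on all of the K\"ahler cone.

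The main obstacle I anticipate is purely the quantitative bookkeeping in the squeeze step: one must verify that the perturbation directions $[\omega]-[\omega_0]$ can genuinely be absorbed into $\pm\epsilon[\omega_0]$ while keeping the differences K\"ahler, which requires using openness of the K\"ahler cone in a fixed norm on $H^{1,1}(X,\R)$ and choosing $\epsilon$ comparable to the distance $\|[\omega]-[\omega_0]\|$. This is routine given that the cone is open, but it is the only place where the topology of the cone (rather than the two monotonicity/scaling lemmas) enters, so it deserves a careful statement. Everything else is an immediate consequence of Lemmas \ref{scaling} and \ref{addingpositive}.
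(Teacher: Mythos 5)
Your proposal is correct and takes essentially the same approach as the paper: both proofs use only Lemma \ref{addingpositive} (adding a K\"ahler class lowers the alpha invariant) together with the scaling Lemma \ref{scaling} to squeeze $\alpha(X,[\omega])$ between $(1+\epsilon)^{-1}\alpha(X,[\omega_0])$ and $(1-\epsilon)^{-1}\alpha(X,[\omega_0])$, with openness of the K\"ahler cone supplying the admissible $\epsilon$. The paper's hypothesis that $\gamma\omega\pm\eta$ be K\"ahler is precisely your two-sided condition with $\lambda=1-\gamma$ and $\mu=1+\gamma$, so the difference is purely notational.
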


\begin{proof} We fix a K\"ahler manifold $(X,\omega)$ and an auxiliary $(1,1)$-form $\eta$, not necessarily positive. Given $\epsilon>0$ sufficiently small, we wish to show there exists a $\delta>0$ such that $$|\alpha(X,[\omega])- \alpha(X,[\omega+\delta\eta])|<\epsilon,$$ where we will take $\delta$ small enough that $\omega+\delta\eta$ remains K\"ahler. 

Suppose there exists a $0<\gamma<1$ such that both $\gamma\omega - \eta$ and $\gamma\omega + \eta$ are K\"ahler. Remark that then $\omega+\eta$ is also K\"ahler. Lemma \ref{scaling} gives $$(1+\gamma)\alpha(X,(1+\gamma)[\omega]) = \alpha(X,[\omega]).$$

Lemma \ref{addingpositive} implies that adding a K\"ahler metric \emph{lowers} the alpha invariant. We apply this by adding the K\"ahler metric $\gamma\omega - \eta$ to $\omega+\eta$. From Lemma \ref{addingpositive} it then follows that $$\alpha(X,(1+\gamma)[\omega])\leq\alpha(X,[\omega+\eta]).$$ So $$\alpha(X,[\omega])=(1+\gamma)\alpha(X,(1+\gamma)[\omega])\leq(1+\gamma)\alpha(X,[\omega+\eta]),$$ hence $$\alpha(X,[\omega]) - \alpha(X,[\omega+\eta]) \leq \gamma\alpha(X,[\omega+\eta]).$$

A similar argument using that $\gamma\omega+\eta$ is K\"ahler gives \begin{equation}\label{temp}\alpha(X,[\omega]) - \alpha(X,[\omega+\eta]) \geq -\gamma\alpha(X,[\omega+\eta]).\end{equation} In particular, provided $\gamma\omega - \eta$ and $\gamma\omega + \eta$ are K\"ahler we have $$|\alpha(X,[\omega]) - \alpha(X,[\omega+\eta])| \leq \gamma\alpha(X,[\omega+\eta]).$$ Equation (\ref{temp}) also gives $$\alpha(X,[\omega+\eta]) \leq \frac{1}{1-\gamma}\alpha(X,[\omega]),$$ which implies under the same hypotheses \begin{equation}\label{temp2}|\alpha(X,[\omega]) - \alpha(X,[\omega+\eta])| \leq \frac{\gamma}{1-\gamma}\alpha(X,[\omega]).\end{equation}

Let $c$ be such that both $\omega+c\eta$ and $\omega-c\eta$ are K\"ahler, which exists as the K\"ahler condition is open. Given $\epsilon>0$, take $\delta = \frac{c\epsilon}{2\alpha(X,[\omega]) + \epsilon}$. Then both $(\delta c^{-1})\omega - \delta\eta$ and $(\delta c^{-1})\omega + \delta\eta$ are K\"ahler. Setting $\gamma = \delta c^{-1}$, we have $0<\gamma<1$ for $\epsilon$ sufficiently small. One computes $\frac{c\delta^{-1}}{1-c\delta^{-1}} = \frac{\epsilon}{2\alpha(X,[\omega])}.$ Therefore equation (\ref{temp2}) implies $$|\alpha(X,[\omega]) - \alpha(X,[\omega+\delta\eta])| \leq \frac{\epsilon}{2}<\epsilon,$$ completing the proof.

\end{proof}

\begin{remark} The proof of the previous result is similar to the proof in the projective case, which uses a characterisation of the alpha invariant in terms of log canonical thresholds of divisors \cite[Proposition 4.2]{RD1}. \end{remark}

\printbibliography

\vspace{4mm}

\noindent Ruadha\'i Dervan, University of Cambridge,  UK and Universit\'e libre de Bruxelles, Belgium. \\ R.Dervan@dpmms.cam.ac.uk

\end{document}